\documentclass[final,3p,times]{elsarticle}
\usepackage{amsmath,amssymb,amsfonts,epsf,amsthm}
\usepackage{hyperref}
\hypersetup{colorlinks=true, urlcolor=blue, citecolor=cyan, pdfborder={0 0 0},}
\usepackage{soul}
\usepackage{natbib}
\usepackage{multirow}
\usepackage{amssymb}
\usepackage{amsmath,amssymb,amsfonts,epsf,amsthm}
\usepackage{hyperref}
\hypersetup{colorlinks=true, urlcolor=blue, citecolor=cyan, pdfborder={0 0 0},}
\usepackage{soul}
\usepackage{natbib}
\usepackage{multirow}
\usepackage{amsmath,amssymb,amscd,amsthm,verbatim,alltt,amsfonts,array}
\usepackage[english]{babel}
\usepackage{latexsym}
\usepackage{amssymb}
\usepackage{euscript}
\usepackage{graphicx}
\usepackage{amssymb}
\usepackage{pgf,tikz}
\usetikzlibrary{arrows}
\usepackage{mathrsfs}
\usepackage[ruled, vlined, Algorithm]{algorithm}
\usepackage{algorithmic}

\newtheorem{theorem}{Theorem}[section]
\newtheorem{lemma}{Lemma}[section]

\newtheorem{conclusion}{Conclusion}[section]

\newtheorem{remark}{Remark}[section]
\newtheorem{example}{Example}[section]

\newtheorem{definition}{Definition}[section]

\numberwithin{equation}{section}
\definecolor{ffqqff}{rgb}{1.,0.,1.}
\definecolor{dcrutc}{rgb}{0.8627450980392157,0.0784313725490196,0.23529411764705882}
\definecolor{qqqqff}{rgb}{0.,0.,1.}
\definecolor{ffqqqq}{rgb}{1.,0.,0.}
\definecolor{sqsqsq}{rgb}{0.12549019607843137,0.12549019607843137,0.12549019607843137}
\journal{--}
\allowdisplaybreaks[1]
\begin{document}
\begin{frontmatter}
\title{    Metric dimension, doubly resolving set and strong metric dimension for $(C_n\Box P_k)\Box P_m$ }

\tnotetext[label1]{}
\author[label1]{Jia-Bao Liu}
\ead{liujiabaoad@163.com;liujiabao@ahjzu.edu.cn}
\author[label2]{Ali Zafari \corref{1}}
\ead{zafari.math.pu@gmail.com; zafari.math@pnu.ac.ir}
\address[label1]{School of Mathematics and Physics, Anhui Jianzhu University, Hefei 230601, P.R. China}
\address[label2]{Department of Mathematics, Faculty of Science, Payame Noor University, P.O. Box 19395-4697, Tehran, Iran}
\cortext[1]{Corresponding author}
\begin{abstract}
A subset $Q = \{q_1, q_2, ..., q_l\}$ of vertices of a connected graph $G$ is a doubly resolving set of $G$ if for any various vertices $x, y \in V(G)$ we have $r(x|Q)-r(y|Q)\neq\lambda I$, where $\lambda$ is an  integer, and $I$ indicates  the unit $l$- vector $(1,..., 1)$. A doubly resolving set of vertices of graph $G$  with the minimum size, is denoted by $\psi(G)$.
In this work, we will consider the computational study of some resolving sets with the minimum size for $(C_n\Box P_k)\Box P_m$.
\end{abstract}
\begin{keyword}
cartesian product,  resolving set, doubly resolving set, strong resolving set
\MSC[2020]  05C12, 05C76.
\end{keyword}
\end{frontmatter}
\section{Introduction and Preliminaries}
\label{sec:introduction}
All graphs considered in this work are assumed to be finite and connected. We use $d_G(p, q)$  to indicate the distance between two vertices $p$ and $q$ in graph $G$ as the length of a shortest path between $p$ and $q$ in $G$, see
~\cite{pap-cg-1}.
The cartesian  product of two graphs $G$ and $H$, denoted by $G \Box H$, is the graph with vertex set $V(G) \times V(H)$ and with edge set
$E(G \times H)$ so that $(g_1,h_1)(g_2,h_2) \in E(G \Box H)$, whenever $h_1 = h_2$ and $g_1g_2 \in E(G)$, or $g_1 = g_2$ and $h_1h_2 \in E(H)$.
A graphical representation of a vertex $p$ of a connected graph $G$ relative to an arranged subset $Q = \{q_1, ..., q_k\}$ of vertices of $G$ is defined as the $k$-tuple $(d(p, q_1), ..., d(p, q_k ))$, and this $k$-tuple is denoted by $r(p|Q)$, where $d(p, q_i)$ is considered as the minimum length of a shortest path from $p$ to $q_i$ in graph $G$. If any vertices  $p$ and $q$ that belong to  $V(G)-Q$  have various representations with respect to the set $Q$, then $Q$ is called a resolving set for $G$
 ~\cite{pap-psb.gh1-2}.
Slater
~\cite{pap-pjs-1}
considered the concept and notation of the metric dimension problem under the term locating set. Also, Harary and Melter
~\cite{pap-fh-1}
considered these problems under the term metric dimension as follows.
A resolving set of vertices of graph $G$ with the minimum size or cardinality is called the metric dimension of $G$ and this minimum size denoted by $\beta(G)$. Resolving parameters in graphs have been studied in
~\cite{pap-M.Abas .ET.AL,pap-M.Ali.ET.AL,pap-M.Baca.ET.AL,pap-mj-1,pap-J.B.Liu.ET.AL,pap-jb1-1,pap-jb1-2,pap-J.-B.Liu.ET.AL.2,pap-X.Zhang.ET.AL}.\\

In 2007 C\'{a}ceres et al.
~\cite{pap-jc-1}
 considered the concept and notation of a doubly resolving set of graph $G$.
Two vertices $u$, $v$ in a graph $G$ are doubly resolved by $x, y \in V (G)$ if $d(u,x)-d(u,y)\neq d(v,x)-d(v,y)$,
and we can see that a subset $Q = \{q_1, q_2, ..., q_l\}$ of vertices of a graph $G$ is a doubly resolving set of $G$ if for any various vertices $x, y \in V(G)$ we have $r(x|Q)-r(y|Q)\neq\lambda I$, where $\lambda$ is an  integer, and $I$ indicates  the unit $l$- vector $(1,..., 1)$, see
~\cite{pap-aa.s.s1-2}.
 A doubly resolving set of vertices of graph $G$  with the minimum size, is denoted by $\psi(G)$.
In 2000 Chartrand et al. showed that for every connected graph $G$ and the path $P_2$, $\beta(G \Box P_2)\leq  \beta(G)+1$, see Theorem 7 in ~\cite{pap-chr-1}.
In 2007 C\'{a}ceres et al. obtained an upper bound for the metric dimension of cartesian product of graphs $G$ and $H$. They showed  that for all graphs $G$ and $H\neq K_1$, $\beta(G \Box H)\leq  \beta(G)+\psi(H)-1$.
In particular,  C\'{a}ceres et al.
~\cite{pap-jc-1}
showed that for every connected graph $G$ and the path $P_n$, $\beta(G \Box P_n)\leq \beta(G)+1$.
Doubly resolving sets have played a special role in the study of resolving sets.
Applications of above concepts  and related parameters in graph theory and other sciences have a long history, in particular, if we consider a graph as a chemical compound then the determination of a doubly resolving set with the minimum size is very useful to analysis  of  chemical compound, and note that these problems are NP hard, see
~\cite{pap-M.Ahmad.ET.AL,pap-pj.jh-1,pap-X. Chen1-2,pap-J.Kratica.ET.AL1,pap-J.Kratica.ET.AL,pap-Khuller}.\\

The notion of a strong metric dimension problem set of vertices of graph $G$ introduced by A. Seb\"{o} and E. Tannier
~\cite{pap-as-1},
indeed introduced a more restricted invariant than the metric dimension and this was further investigated by O. R. Oellermann and Peters-Fransen ~\cite{pap-oro-1}.
A set $Q\subseteq V(G)$  is called  strong resolving set of $G$, if for any various vertices $p$ and $q$ of $G$ there is a vertex of $Q$, say $r$  so that $p$ belongs to a shortest $q - r$ path or $q$ belongs to a shortest $p - r$ path. A strong metric basis of $G$ is indicated by $sdim(G)$  defined as the minimum size of a strong resolving set of $G$.\\

Now, we use $C_n$ and $P_k$ to denote the cycle on $n \geq 3$ and the path on  $k\geq 3$ vertices, respectively.
In this article, we will consider the computational study of some resolving sets with the minimum size for $(C_n\Box P_k)\Box P_m$.
Indeed, in Section 3, we define a graph isomorphic to the cartesian product $C_n\Box P_k$, and we will consider the determination of a doubly resolving set of vertices with the minimum size for the cartesian product $C_n\Box P_k$. In particular, in  Section 3, we construct a graph  so that this graph is isomorphic to $(C_n\Box P_k)\Box P_m$, and we compute some resolving parameters  with the minimum size for
$(C_n\Box P_k)\Box P_m$.\newline
\section{ Some Facts}
\begin{definition}\label{b.1}
Consider two graphs $G$ and $H$. If there is a bijection, $\theta: V(G)\rightarrow V(H)$  so that $u$ is adjacent to $v$ in $G$ if and only if $\theta(u)$ is adjacent to $\theta(v)$ in $H$, then we say that $G$ and $H$ are  isomorphic.
\end{definition}
\begin{definition}\label{b.1.1}
A vertex $u$ of a graph $G$ is called maximally distant from a vertex $v$ of $G$, if for every $w\in N_G(u)$, we have  $d(v,w) \leq d(v, u)$, where $N_G (u)$ to denote the set of neighbors that $u$ has in $G$. If $u$ is maximally distant from $v$ and $v$ is maximally distant from $u$, then $u$ and $v$ are said to be mutually maximally distant.
\end{definition}
 \begin{remark} \label{b.4}
Suppose that $n$ is an even natural number  greater than or equal to $4$ and  $G$ is the cycle graph $C_n$ . Then $\beta(G) =2$, $\psi(G)=3$ and  $sdim(G)=\lceil \frac{n}{2}\rceil$.
\end{remark}
\begin{remark} \label{b.5}
Suppose that $n$ is an odd natural number  greater than or equal to $3$ and  $G$ is the cycle graph $C_n$. Then $\beta(G) =2$, $\psi(G)=2$ and  $sdim(G)=\lceil \frac{n}{2}\rceil$.
\end{remark}
\begin{remark} \label{b.5.1}
Consider the path $P_n$ for each $n\geq2$. Then $\beta(P_n) =1$, $\psi(P_n)=2$.
\end{remark}
\begin{theorem}\label{b.6}
Suppose that $n$ is an odd integer  greater than or equal to $3$.  Then the  minimum size of a resolving set in the cartesian product
$C_n\Box P_k$ is  $2$.
\end{theorem}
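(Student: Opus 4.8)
The plan is to show $2\le\beta(C_n\Box P_k)\le 2$. For the lower bound, recall the elementary fact that a connected graph has metric dimension $1$ precisely when it is a path $P_m$. Since $C_n\Box P_k$ contains the cycle $C_n$ as a subgraph — the set $V(C_n)\times\{u\}$ for any fixed vertex $u$ of $P_k$ induces a copy of $C_n$ — it is not a path, and hence $\beta(C_n\Box P_k)\ge 2$. This half is immediate once the characterization of dimension-$1$ graphs is invoked.

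For the upper bound, the idea is to exploit commutativity of the Cartesian product together with the bound of C\'{a}ceres et al.\ recalled in Section~\ref{sec:introduction}. Since $C_n\Box P_k\cong P_k\Box C_n$ and $C_n\neq K_1$, applying $\beta(G\Box H)\le\beta(G)+\psi(H)-1$ with $G=P_k$ and $H=C_n$ gives $\beta(C_n\Box P_k)\le\beta(P_k)+\psi(C_n)-1$. Now $\beta(P_k)=1$ by Remark~\ref{b.5.1}, and $\psi(C_n)=2$ by Remark~\ref{b.5} since $n$ is odd and at least $3$. Therefore $\beta(C_n\Box P_k)\le 1+2-1=2$, and combining with the lower bound the result follows.

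The only conceptual point is to realise that the product bound must be applied with the cycle, not the path, placed in the ``$\psi$'' slot — this is exactly where the parity hypothesis enters, through $\psi(C_n)=2$ rather than $3$. If a self-contained argument is preferred, one can instead exhibit an explicit resolving set of size $2$: label $V(C_n)=\{0,1,\dots,n-1\}$ with $i$ adjacent to $i\pm1\pmod n$, set $m=(n-1)/2$, let $u_1$ be an end vertex of $P_k$, and take $W=\{(0,u_1),(m,u_1)\}$. Using $d_{C_n\Box P_k}\big((i,j),(i',j')\big)=d_{C_n}(i,i')+d_{P_k}(j,j')$, a collision $r((i,j)|W)=r((i',j')|W)$ forces, after subtracting the two coordinates, $d_{C_n}(i,0)-d_{C_n}(i,m)=d_{C_n}(i',0)-d_{C_n}(i',m)$; injectivity of $i\mapsto d_{C_n}(i,0)-d_{C_n}(i,m)$ on $V(C_n)$ then yields $i=i'$, and re-substituting gives $j=j'$. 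The step I would expect to need the most care is exactly this injectivity claim: it holds because along one arc of the cycle the function runs through integers of the parity of $m$ while along the complementary arc it runs through integers of the opposite parity, so the two value sets are disjoint. Note that this injectivity is precisely the statement that $\{0,m\}$ is a doubly resolving set of $C_n$, which again is available only when $n$ is odd.
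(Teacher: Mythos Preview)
Your proof is correct. Note, however, that the paper does not actually prove this theorem: it is listed in Section~2 among the background ``Facts'' and attributed to the literature (C\'{a}ceres et al.), so there is no in-paper argument to compare against. Both of your routes are sound. The first---applying $\beta(G\Box H)\le\beta(G)+\psi(H)-1$ with $G=P_k$, $H=C_n$ and invoking Remarks~\ref{b.5} and~\ref{b.5.1}---is exactly in the spirit of how the paper cites this result. The second, explicit, argument is a genuine addition: the resolving pair $\{(0,u_1),(m,u_1)\}$ you exhibit is precisely the set $M_1=\{x_1,x_{\lceil n/2\rceil}\}$ that the paper later \emph{asserts} (in the proof of Theorem~\ref{d.1}) to be a minimum resolving set lying in $V_1$, without justification. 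Your parity computation showing injectivity of $i\mapsto d_{C_n}(i,0)-d_{C_n}(i,m)$ therefore supplies a verification the paper itself omits, and simultaneously confirms that $\{0,m\}$ doubly resolves $C_n$ when $n$ is odd, as claimed in Remark~\ref{b.5}.
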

\begin{theorem}\label{b.7}
Suppose that $n$ is an even integer  greater than or equal to $4$.
Then the  minimum size of a resolving set in the cartesian product $C_n\Box P_k$ is  $3$.
\end{theorem}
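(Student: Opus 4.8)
The plan is to prove the two inequalities $\beta(C_n\Box P_k)\le 3$ and $\beta(C_n\Box P_k)\ge 3$ separately, working throughout with the identification $V(C_n\Box P_k)=\mathbb{Z}_n\times\{1,\dots,k\}$, for which $d\big((i,j),(i',j')\big)=d_{C_n}(i,i')+|j-j'|$. I will rely on two elementary facts about even cycles: (a) if $a_1,a_2\in\mathbb{Z}_n$ are antipodal then $\{a_1,a_2\}$ does not resolve $C_n$; and (b) if $a_1\ne a_2$ are not antipodal then each of $i\mapsto d_{C_n}(i,a_1)-d_{C_n}(i,a_2)$ and $i\mapsto d_{C_n}(i,a_1)+d_{C_n}(i,a_2)$ is constant on an arc containing at least two vertices (the short $a_1a_2$-arc for the sum; the arc from $a_2$ to the antipode of $a_1$ for the difference). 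Both are consistent with $\beta(C_n)=2$ from Remark~\ref{b.4}.

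For the upper bound I will verify that $W=\{(0,1),(1,1),(0,k)\}$ is a resolving set. The landmarks $(0,1),(1,1)$ already distinguish $(i,j)\ne(i',j')$ unless $d_{C_n}(i,0)-d_{C_n}(i,1)=d_{C_n}(i',0)-d_{C_n}(i',1)$ and $j-j'=d_{C_n}(i',0)-d_{C_n}(i,0)$; for such a pair the distances to $(0,k)$ differ by $2\big(d_{C_n}(i,0)-d_{C_n}(i',0)\big)$, so it remains only to check that $d_{C_n}(i,0)=d_{C_n}(i',0)$ is incompatible with the first identity when $i\ne i'$ and $n\ge 4$ --- indeed $d_{C_n}(i,0)=d_{C_n}(i',0)$ with $i\ne i'$ forces $i'\equiv-i$, and then the identity forces $1$ to be equidistant from $i$ and $-i$ in $C_n$, which for $n\ge 4$ occurs only for $i=0$. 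Hence $W$ resolves $C_n\Box P_k$ and $\beta(C_n\Box P_k)\le 3$.

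For the lower bound, assume for contradiction that $W=\{(a_1,b_1),(a_2,b_2)\}$ resolves $C_n\Box P_k$, and produce two distinct vertices of $V(C_n\Box P_k)\setminus W$ with equal representation in each of the following exhaustive cases. If $a_1=a_2$, take $(a_1+1,1)$ and $(a_1-1,1)$. If $a_1\ne a_2$ and $\{a_1,a_2\}$ is antipodal, pick $i\ne i'$ not resolved by $\{a_1,a_2\}$ in $C_n$ (necessarily $i,i'\notin\{a_1,a_2\}$, using fact (a)) and take $(i,1),(i',1)$. If $a_1\ne a_2$ is non-antipodal and $b_1=b_2=:b$, choose adjacent $i,i'$ on the ``difference'' arc of fact (b), where $d_{C_n}(i',a_1)-d_{C_n}(i,a_1)=d_{C_n}(i',a_2)-d_{C_n}(i,a_2)=1$, and path-coordinates $j,j'$ with $|j-b|-|j'-b|=1$, chosen so that $(i,j),(i',j')\notin W$ (possible since $k\ge 3$). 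If $a_1\ne a_2$ is non-antipodal, $b_1\ne b_2$, and some $b_t$ is interior --- say $b_1\in\{2,\dots,k-1\}$, normalised via a rotation/reflection so that $a_1=0$ and $d_{C_n}(0,a_2)=m$ with $1\le m\le n/2-1$ --- then $\{(1,b_1+1),(n-1,b_1-1)\}$ (if $b_1>b_2$) or $\{(1,b_1-1),(n-1,b_1+1)\}$ (if $b_1<b_2$) is such a pair. Finally, if $b_1\ne b_2$ with $\{b_1,b_2\}=\{1,k\}$, take adjacent $i,i'$ on the short $a_1a_2$-arc, where the sum of fact (b) is constant, with a unit shift in the path coordinate (after normalising $a_1=0,b_1=1$ one checks $(0,2)$ and $(1,1)$ work). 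Each case contradicts the resolving property, so $\beta(C_n\Box P_k)\ge 3$.

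The step I expect to be the real obstacle is the lower bound, and within it the bookkeeping in the last three cases guaranteeing that the constructed confused pair always lies outside $W$ --- this is exactly where $k\ge 3$ and $n\ge 4$ are used, and where the parity of $n$ enters: for odd $n$ one of the combinations $d_{C_n}(\cdot,a_1)\pm d_{C_n}(\cdot,a_2)$ can be made injective by a near-antipodal choice of $a_2$, which is precisely why Theorem~\ref{b.6} gives $2$ rather than $3$. The upper bound and the first two cases of the lower bound are routine once the distance formula on $C_n\Box P_k$ is written out.
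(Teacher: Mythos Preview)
The paper does not prove Theorem~\ref{b.7}: it is listed in Section~2 (``Some Facts'') as a known result from the literature (the metric dimension of $C_n\Box P_k$ is due to C\'aceres et al.\ \cite{pap-jc-1}), and the paper simply quotes it for later use. So there is no in-paper argument to compare your proposal against; what you have written is a self-contained proof where the paper offers none.

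Your argument is essentially correct. Two small repairs are worth making. In the upper bound, the assertion that ``$1$ equidistant from $i$ and $-i$ in $C_n$ occurs only for $i=0$'' should read ``only for $i\in\{0,n/2\}$''; this is harmless, since the hypothesis $d_{C_n}(i,0)=d_{C_n}(i',0)$ with $i\neq i'$ already forces $i\notin\{0,n/2\}$. In the lower-bound Cases~4 and~5, your normalisation must place $a_2$ on the positive side, i.e.\ $a_2=m\in\{1,\dots,n/2-1\}$ rather than merely $d_{C_n}(0,a_2)=m$: for instance in Case~5 the pair $(0,2),(1,1)$ has distances $m+k-2$ and $m+k$ to $(a_2,k)$ when $a_2=-m$, so the reflection you invoke is genuinely needed and should be stated as fixing $a_2=m$, not just its distance to $0$. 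Finally, the bookkeeping you flag as ``the real obstacle'' --- ensuring the confused pair avoids $W$ --- is in fact automatic: any two distinct vertices with identical metric representations must both lie outside $W$, since a vertex of $W$ is the unique point at distance $0$ from itself.
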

\begin{theorem}\label{b.8}
If $n$ is an even or odd integer is greater than or equal to $3$, then the minimum size of a strong resolving set in the cartesian product
$C_n\Box P_k$ is  $n$.
\end{theorem}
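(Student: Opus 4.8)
The plan is to use the characterization of strong resolving sets via the so-called strong resolving graph. Recall (Oellermann–Peters-Fransen) that a set $Q\subseteq V(G)$ is a strong resolving set of $G$ if and only if $Q$ is a vertex cover of the strong resolving graph $G_{SR}$, whose vertices are those of $G$ and whose edges join pairs of mutually maximally distant (MMD) vertices in the sense of Definition \ref{b.1.1}. Consequently $sdim(G)$ equals the minimum vertex cover number of $G_{SR}$, and by the Gallai identity this is $|V(G)| - \alpha(G_{SR})$, where $\alpha$ denotes the independence number. So the task reduces to two things: (i) identify exactly which pairs of vertices of $C_n\Box P_k$ are mutually maximally distant, and (ii) compute the independence number of the resulting strong resolving graph.

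First I would set coordinates: write $V(C_n\Box P_k)=\{(i,j): i\in\mathbb{Z}_n,\ j\in\{1,\dots,k\}\}$ with $d\big((i,j),(i',j')\big)=d_{C_n}(i,i')+|j-j'|$. The neighbors of $(i,j)$ are $(i\pm1,j)$ and $(i,j\pm1)$ when the latter exist. I would first argue that a vertex $(i,j)$ can only be maximally distant from something if $j\in\{1,k\}$: if $1<j<k$, then for any target $(i',j')$ one of the two $P_k$-neighbors strictly increases the $|j-j'|$ term (unless $j'=j$, but then moving toward the antipodal point on the cycle, or along $P_k$, still shows $(i,j)$ is dominated), so interior-layer vertices are never MMD with anything. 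Hence $G_{SR}$ lives on the two "boundary" copies $C_n\times\{1\}$ and $C_n\times\{k\}$. Next, within this boundary set I would determine the MMD pairs by analogy with the cycle case (Remarks \ref{b.4}, \ref{b.5}): two boundary vertices $(i,j)$ and $(i',j')$ are MMD precisely when $i$ and $i'$ are "antipodal or nearly antipodal" on $C_n$ in the way that makes them MMD in $C_n$ itself — for $n$ odd each $i$ has two such partners, for $n$ even each $i$ has one — and this holds regardless of whether $j,j'$ are $1$ or $k$, because the $P_k$-coordinate contributes additively and the endpoints $1,k$ of $P_k$ are themselves MMD in $P_k$ (Remark \ref{b.5.1}). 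This should identify $G_{SR}$ concretely: on $2n$ vertices, with each vertex of degree $2$ (if $n$ odd) or higher once both layers and the matching between layers are accounted for.

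The main computation is then the independence number of $G_{SR}$. I expect $G_{SR}$ to decompose nicely: when the antipodality relation on $C_n$ is used, the boundary graph is a union of cycles or a single cycle on $2n$ vertices (for $n$ odd, the "near-antipodal" relation on $C_n$ gives a $C_n$-like structure, doubled across the two layers into a $C_{2n}$), and one shows $\alpha(G_{SR}) = n$, so that $sdim(C_n\Box P_k) = 2n - n = n$. I would handle the even and odd cases separately but in parallel, using Remarks \ref{b.4} and \ref{b.5} to pin down the per-vertex count of MMD partners, and then exhibit an explicit independent set of size $n$ (e.g. "every other vertex" around the relevant cycle, or one full boundary copy minus a carefully chosen transversal) together with a matching-based or counting lower bound forcing the vertex cover to have size at least $n$.

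The hard part will be step (i): rigorously proving the exact list of MMD pairs, because maximal distantness is sensitive to parity of $n$ (the cycle behaves differently for odd and even $n$, as Remarks \ref{b.4}–\ref{b.5} already signal) and one must carefully check all four directions of movement $(i\pm1,j)$, $(i,j\pm1)$ for both endpoints of a candidate pair, ruling out the interior layers and the "too close on the cycle" pairs while confirming the genuinely antipodal ones. Once the edge set of $G_{SR}$ is nailed down, the vertex-cover/independence-number computation and hence the value $n$ should follow by a short direct argument; I would also double-check consistency with the known base cases $sdim(C_n)=\lceil n/2\rceil$ is *not* what appears here — the product structure changes the answer to $n$ — which is itself a useful sanity check that the MMD analysis on the two boundary layers has been done correctly.
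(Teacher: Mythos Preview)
The paper does not actually prove Theorem~\ref{b.8}; it is listed in Section~2 among the preliminary ``Some Facts'' and, as the opening sentence of Section~3 makes explicit, is quoted from the literature (specifically \cite{pap-jc-1,pap-ja-1}). So there is no in-paper argument to compare your proposal against.

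That said, your strong-resolving-graph strategy is exactly the standard route taken in the cited reference \cite{pap-ja-1}, and it will succeed. One point in your MMD analysis needs correcting: boundary vertices in the \emph{same} extreme layer are never mutually maximally distant. Indeed, if $(i,1)$ and $(i',1)$ both lie in layer $1$, then the $P_k$-neighbor $(i,2)$ of $(i,1)$ satisfies $d\big((i,2),(i',1)\big)=d_{C_n}(i,i')+1>d\big((i,1),(i',1)\big)$, so $(i,1)$ is not maximally distant from $(i',1)$. The correct characterization (and this is a general fact about Cartesian products proved in \cite{pap-ja-1}) is that $(i,j)$ and $(i',j')$ are MMD in $C_n\Box P_k$ if and only if $i,i'$ are MMD in $C_n$ \emph{and} $j,j'$ are MMD in $P_k$, the latter forcing $\{j,j'\}=\{1,k\}$. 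With this correction the non-isolated part of $G_{SR}$ is a bipartite graph between the two extreme layers: for $n$ even it is a perfect matching of $n$ edges (each $(i,1)$ paired with $(i+n/2,k)$), and for $n$ odd it is a single $2n$-cycle (each $(i,1)$ adjacent to $(i\pm(n-1)/2,k)$). In either case the minimum vertex cover has size exactly $n$, and hence $sdim(C_n\Box P_k)=n$.
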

\section{Main Results}

Some resolving parameters such as the minimum size of resolving sets and strong resolving sets calculated for
the cartesian  product $C_n\Box P_k$, see
~\cite{pap-jc-1, pap-ja-1},
but in this section we will determine some resolving sets of vertices with the minimum size for $(C_n\Box P_k)\Box P_m$. Suppose  $n$ and $k$ are  natural numbers  greater than or equal to $3$, and $[n]=\{1, ..., n\}$. Now, suppose that $G$ is a graph with vertex set $\{x_1,  ..., x_{nk}\}$ on layers $V_1, V_2, ..., V_k$, where $V_p=\{x_{(p-1)n+1}, x_{(p-1)n+2}, ..., x_{(p-1)n+n}\}$ for $1\leq p\leq k$, and the edge set of graph $G$ is
$E(G)=\{x_ix_j\,|\, x_i, x_j\in V_p,  1\leq i<j\leq nk, j-i=1 \text{or}    j-i=n-1  \}\cup \{x_ix_j\,|\, x_i\in V_q, x_j\in V_{q+1},  1\leq i<j\leq nk,  1\leq q\leq k-1, j-i=n  \}$.
We can see that this graph is isomorphic to the cartesian product $C_n\Box P_k$. So, we can assume throughout this article
$V(C_n\Box P_k)=\{x_1,  ..., x_{nk}\}$. Now, in this section we give a more elaborate description of the cartesian product $C_n\Box P_k$,
that are required to prove of Theorems. We use $V_p$, $1\leq p\leq k$, to indicate a layer of the cartesian product $C_n\Box P_k$, where $V_p$, is defined already. Also, for $1\leq e<d\leq nk$, we say that two vertices $x_e$ and  $x_d$ in  $C_n\Box P_k$ are compatible, if $n|d-e$.
We can see that the degree of a vertex in the layers $V_1$ and $V_k$ is 3, also the degree of a vertex in the layer $V_p$, $1<p<k$ is 4, and hence
$C_n\Box P_k$ is not regular. We say that two layers of $C_n\Box P_k$ are congruous, if the degree of compatible vertices in two layers are identical.
Note that, if $n$ is an even natural  number, then  $C_n\Box P_k$ contains no cycles of odd length, and hence in this case $C_n\Box P_k$ is bipartite.
For more result of families of graphs with constant metric, see
~\cite{pap-M.Ahmad.ET.AL,pap-mi-1}.
The cartesian product $C_4\Box P_3$ is depicted in Figure 1.\newline
\begin{center}
\begin{tikzpicture}[line cap=round,line join=round,>=triangle 45,x=15.0cm,y=15.0cm]
\clip(6.199538042905856,0.29983666911777535) rectangle (6.450634938365554,0.7288631527509081);
\draw [color=black] (6.298712432261516,0.6857056072380374)-- (6.248939348759354,0.6177720743499512);
\draw [color=black] (6.248939348759354,0.6177720743499512)-- (6.348485515763677,0.6197899020594984);
\draw [color=black] (6.348485515763677,0.6197899020594984)-- (6.398258599265839,0.6863782164745531);
\draw [color=black] (6.398258599265839,0.6863782164745531)-- (6.298712432261516,0.6857056072380374);
\draw [color=black] (6.3,0.55)-- (6.246921521049807,0.4859406639928731);
\draw [color=black] (6.246921521049807,0.4859406639928731)-- (6.350029890846933,0.4852492773545749);
\draw [color=black] (6.350029890846933,0.4852492773545749)-- (6.4,0.55);
\draw [color=black] (6.4,0.55)-- (6.3,0.55);
\draw [color=black] (6.298039823025,0.4328045343081325)-- (6.25025629000202,0.3680152963618003);
\draw [color=black] (6.25025629000202,0.3680152963618003)-- (6.350861337520641,0.3671838496880927);
\draw [color=black] (6.350861337520641,0.3671838496880927)-- (6.39960381773887,0.4328045343081325);
\draw [color=black] (6.39960381773887,0.4328045343081325)-- (6.298039823025,0.4328045343081325);
\draw [color=black] (6.248939348759354,0.6177720743499512)-- (6.25025629000202,0.3680152963618003);
\draw [color=black] (6.348485515763677,0.6197899020594984)-- (6.350861337520641,0.3671838496880927);
\draw [color=black] (6.398258599265839,0.6863782164745531)-- (6.39960381773887,0.4328045343081325);
\draw [color=black] (6.298712432261516,0.6857056072380374)-- (6.298039823025,0.4328045343081325);
\draw (6.204322963708524,0.34972346954023265) node[anchor=north west] {Figure 1. $C_4\Box P_3$};
\begin{scriptsize}
\draw [fill=black] (6.298712432261516,0.6857056072380374) circle (1.5pt);
\draw[color=black] (6.304300323793015,0.6972681791500184) node {$x_{12}$};
\draw [fill=black] (6.248939348759354,0.6177720743499512) circle (1.5pt);
\draw[color=black] (6.237784589896406,0.6207750851689172) node {$x_9$};
\draw [fill=black] (6.348485515763677,0.6197899020594984) circle (1.5pt);
\draw[color=black] (6.343378317457272,0.6324153386008239) node {$x_{10}$};
\draw [fill=black] (6.398258599265839,0.6863782164745531) circle (1.5pt);
\draw[color=black] (6.4040739246379275,0.698099625823726) node {$x_{11}$};
\draw [fill=black] (6.3,0.55) circle (1.5pt);
\draw[color=black] (6.31596321714043,0.5617423713356761) node {$x_8$};
\draw [fill=black] (6.246921521049807,0.4859406639928731) circle (1.5pt);
\draw[color=black] (6.235290249875283,0.49439519076535876) node {$x_5$};
\draw [fill=black] (6.350029890846933,0.4852492773545749) circle (1.5pt);
\draw[color=black] (6.337558190741319,0.4968895307864816) node {$x_6$};
\draw [fill=black] (6.4,0.55) circle (1.5pt);
\draw[color=black] (6.409636817985343,0.5617423713356761) node {$x_7$};
\draw [fill=black] (6.298039823025,0.4328045343081325) circle (1.5pt);
\draw[color=black] (6.3134688771193075,0.4445083903429015) node {$x_4$};
\draw [fill=black] (6.25025629000202,0.3680152963618003) circle (1.5pt);
\draw[color=black] (6.239447483243821,0.37217252973033843) node {$x_1$};
\draw [fill=black] (6.350861337520641,0.3671838496880927) circle (1.5pt);
\draw[color=black] (6.337558190741319,0.3788241031199994) node {$x_2$};
\draw [fill=black] (6.39960381773887,0.4328045343081325) circle (1.5pt);
\draw[color=black] (6.409062604680173,0.4386882636269481) node {$x_3$};
\end{scriptsize}
\end{tikzpicture}
\end{center}
Now, let  $m\geq2$ be an  integer. Suppose $1\leq i\leq m$ and consider $i^{th}$ copy of the cartesian product $C_n\Box P_k$ with the vertex set  $\{x_1^{(i)},  ..., x_{nk}^{(i)}\}$ on the layers $V_1^{(i)}, V_2^{(i)}, ..., V_k^{(i)}$, where it can be defined $V_p^{(i)}$ as similar $V_p$ on the vertex set $\{x_1^{(i)},  ..., x_{nk}^{(i)}\}$. Also, suppose that $K$ is a graph with vertex set $\{x_1^{(1)},  ..., x_{nk}^{(1)}\}\cup \{x_1^{(2)},  ..., x_{nk}^{(2)}\}\cup ... \cup\{x_1^{(m)},  ..., x_{nk}^{(m)}\}$ so that for $1\leq t \leq nk$, the vertex $x_t^{(r)}$ is adjacent to the vertex $x_t^{(r+1)}$ in $K$, for $1\leq r\leq m-1$, then we can see that the graph $K$ is isomorphic to $(C_n\Box P_k)\Box P_m$.
For $1\leq e<d\leq nk$, we say that two vertices $x_e^{(i)}$ and  $x_d^{(i)}$ in $i^{th}$ copy of the cartesian product $C_n\Box P_k$ are compatible, if $n|d-e$.
The graph $(C_4\Box P_3)\Box P_2$ is depicted in Figure 2. \newline
\begin{center}
\begin{tikzpicture}[line cap=round,line join=round,>=triangle 45,x=4cm,y=4cm]
\clip(6.5645454339715394,1.0260516182926602) rectangle (8.674716775107852,3.529151416054353);
\draw (8.218853014490843,3.2967058297352008)-- (7.204885316407781,3.292391073573145);
\draw (8.218853014490843,3.2967058297352008)-- (8.2,2.6);
\draw (8.2,2.6)-- (7.2,2.6);
\draw (7.2,2.6)-- (7.204885316407781,3.292391073573145);
\draw (6.989147508305002,2.7357875286679763)-- (7.998800450226008,2.7530465533161985);
\draw (7.998800450226008,2.7530465533161985)-- (7.994485694063952,2.1403511783043068);
\draw (7.994485694063952,2.1403511783043068)-- (6.997777020629113,2.148980690628418);
\draw (6.997777020629113,2.148980690628418)-- (6.989147508305002,2.7357875286679763);
\draw (7.204885316407781,3.292391073573145)-- (6.989147508305002,2.7357875286679763);
\draw (8.218853014490843,3.2967058297352008)-- (7.998800450226008,2.7530465533161985);
\draw (8.2,2.6)-- (7.994485694063952,2.1403511783043068);
\draw (7.2,2.6)-- (6.997777020629113,2.148980690628418);
\draw (6.997777020629113,2.148980690628418)-- (7.,1.4);
\draw (7.,1.4)-- (8.,1.4);
\draw (8.2,2.6)-- (8.2,2.);
\draw (7.2,2.6)-- (7.2,2.);
\draw (8.2,2.)-- (8.,1.4);
\draw (8.2,2.)-- (7.2,2.);
\draw (7.2,2.)-- (7.,1.4);
\draw (7.994485694063952,2.1403511783043068)-- (8.,1.4);
\draw [color=black] (7.739915080502673,3.089597533956533)-- (7.398986742414686,3.09282282304291);
\draw [color=black] (7.398986742414686,3.09282282304291)-- (7.307807711449688,2.889423446274836);
\draw [color=black] (7.739915080502673,3.089597533956533)-- (7.657934713423617,2.8911187505019766);
\draw [color=black] (7.307807711449688,2.889423446274836)-- (7.657934713423617,2.8911187505019766);
\draw [color=black] (7.748544592826784,2.455328378134364)-- (7.399049343700282,2.455328378134364);
\draw [color=black] (7.399049343700282,2.455328378134364)-- (7.34295751359356,2.2568495946798075);
\draw [color=black] (7.748544592826784,2.455328378134364)-- (7.679508494233895,2.261164350841863);
\draw [color=black] (7.679508494233895,2.261164350841863)-- (7.34295751359356,2.2568495946798075);
\draw [color=black] (7.399049343700282,1.8512625154465834)-- (7.7528593489888395,1.855577271608639);
\draw [color=black] (7.7528593489888395,1.855577271608639)-- (7.692452762720062,1.6053214142094154);
\draw [color=black] (7.692452762720062,1.6053214142094154)-- (7.347272269755615,1.6010066580473599);
\draw [color=black] (7.399049343700282,1.8512625154465834)-- (7.347272269755615,1.6010066580473599);
\draw [color=black] (7.307807711449688,2.889423446274836)-- (7.34295751359356,2.2568495946798075);
\draw [color=black] (7.398986742414686,3.09282282304291)-- (7.399049343700282,2.455328378134364);
\draw [color=black] (7.739915080502673,3.089597533956533)-- (7.748544592826784,2.455328378134364);
\draw [color=black] (7.657934713423617,2.8911187505019766)-- (7.679508494233895,2.261164350841863);
\draw [color=black] (7.679508494233895,2.261164350841863)-- (7.692452762720062,1.6053214142094154);
\draw [color=black] (7.748544592826784,2.455328378134364)-- (7.7528593489888395,1.855577271608639);
\draw [color=black] (7.34295751359356,2.2568495946798075)-- (7.347272269755615,1.6010066580473599);
\draw [color=black] (7.399049343700282,2.455328378134364)-- (7.399049343700282,1.8512625154465834);
\draw [color=black] (7.398986742414686,3.09282282304291)-- (7.204885316407781,3.292391073573145);
\draw [color=black] (7.739915080502673,3.089597533956533)-- (8.218853014490843,3.2967058297352008);
\draw [color=black] (7.657934713423617,2.8911187505019766)-- (7.998800450226008,2.7530465533161985);
\draw [color=black] (7.307807711449688,2.889423446274836)-- (6.989147508305002,2.7357875286679763);
\draw [color=black] (7.748544592826784,2.455328378134364)-- (8.2,2.6);
\draw [color=black] (7.399049343700282,2.455328378134364)-- (7.2,2.6);
\draw [color=black] (7.34295751359356,2.2568495946798075)-- (6.997777020629113,2.148980690628418);
\draw [color=black] (7.679508494233895,2.261164350841863)-- (7.994485694063952,2.1403511783043068);
\draw [color=black] (7.7528593489888395,1.855577271608639)-- (8.2,2.);
\draw [color=black] (7.399049343700282,1.8512625154465834)-- (7.2,2.);
\draw [color=black] (7.347272269755615,1.6010066580473599)-- (7.,1.4);
\draw [color=black] (7.692452762720062,1.6053214142094154)-- (8.,1.4);
\draw (7.304061064625124,1.2297922995058213) node[anchor=north west] {Figure 2. $(C_4\Box P_3)\Box P_2$};
\begin{scriptsize}
\draw [fill=black] (8.218853014490843,3.2967058297352008) circle (1.5pt);
\draw[color=black] (8.25268250688059,3.3642184836436986) node {$x_{11}^{(2)}$};
\draw [fill=black] (7.204885316407781,3.292391073573145) circle (1.5pt);
\draw[color=black] (7.238830069415096,3.3593675150433855) node {$x_{12}^{(2)}$};
\draw [fill=black] (8.2,2.6) circle (1.5pt);
\draw[color=black] (8.25268250688059,2.631722224996382) node {$x_7^{(2)}$};
\draw [fill=black] (7.2,2.6) circle (1.5pt);
\draw[color=black] (7.136959728808516,2.636573193596695) node {$x_8^{(2)}$};
\draw [fill=black] (6.989147508305002,2.7357875286679763) circle (1.5pt);
\draw[color=black] (6.9235171103947275,2.7626983772048423) node {$x_9^{(2)}$};
\draw [fill=black] (7.998800450226008,2.7530465533161985) circle (1.5pt);
\draw[color=black] (8.078643762868055,2.7626983772048423) node {$x_{10}^{(2)}$};
\draw [fill=black] (7.994485694063952,2.1403511783043068) circle (1.5pt);
\draw[color=black] (8.063494731468367,2.146625364965046) node {$x_6^{(2)}$};
\draw [fill=black] (6.997777020629113,2.148980690628418) circle (1.5pt);
\draw[color=black] (6.913815173194101,2.1563273021656726) node {$x_5^{(2)}$};
\draw [fill=black] (7.,1.4) circle (1.5pt);
\draw[color=black] (6.942026796397861,1.3559174831139686) node {$x_1^{(2)}$};
\draw [fill=black] (8.,1.4) circle (1.5pt);
\draw[color=black] (8.044090857067115,1.3704703889149088) node {$x_2^{(2)}$};
\draw [fill=black] (8.2,2.) circle (1.5pt);
\draw[color=black] (8.257533475480901,2.030202118557525) node {$x_3^{(2)}$};
\draw [fill=black] (7.2,2.) circle (1.5pt);
\draw[color=black] (7.127257791607889,2.0447550243584653) node {$x_4^{(2)}$};
\draw [fill=black] (7.739915080502673,3.089597533956533) circle (1.5pt);
\draw[color=black] (7.7384798352473725,3.1653287710308513) node {$x_{11}^{(1)}$};
\draw [fill=black] (7.398986742414686,3.09282282304291) circle (1.5pt);
\draw[color=black] (7.432868813427631,3.1604778024305378) node {$x_{12}^{(1)}$};
\draw [fill=black] (7.307807711449688,2.889423446274836) circle (1.5pt);
\draw[color=black] (7.268532006615723,2.942184215416437) node {$x_9^{(1)}$};
\draw [fill=black] (7.657934713423617,2.8911187505019766) circle (1.5pt);
\draw[color=black] (7.626907557440165,2.9518861526170634) node {$x_{10}^{(1)}$};
\draw [fill=black] (7.748544592826784,2.455328378134364) circle (1.5pt);
\draw[color=black] (7.802138552650193,2.5250009157894877) node {$x_7^{(1)}$};
\draw [fill=black] (7.399049343700282,2.455328378134364) circle (1.5pt);
\draw[color=black] (7.452868813427631,2.5250009157894877) node {$x_8^{(1)}$};
\draw [fill=black] (7.34295751359356,2.2568495946798075) circle (1.5pt);
\draw[color=black] (7.277637818217603,2.3067073287753868) node {$x_5^{(1)}$};
\draw [fill=black] (7.679508494233895,2.261164350841863) circle (1.5pt);
\draw[color=black] (7.627205620239538,2.321260234576327) node {$x_6^{(1)}$};
\draw [fill=black] (7.399049343700282,1.8512625154465834) circle (1.5pt);
\draw[color=black] (7.452868813427631,1.9186298407503182) node {$x_4^{(1)}$};
\draw [fill=black] (7.7528593489888395,1.855577271608639) circle (1.5pt);
\draw[color=black] (7.806989521250506,1.9283317779509448) node {$x_3^{(1)}$};
\draw [fill=black] (7.692452762720062,1.6053214142094154) circle (1.5pt);
\draw[color=black] (7.757883709648626,1.6421246305324568) node {$x_2^{(1)}$};
\draw [fill=black] (7.347272269755615,1.6010066580473599) circle (1.5pt);
\draw[color=black] (7.2921907240185435,1.6518265677330835) node {$x_1^{(1)}$};
\end{scriptsize}
\end{tikzpicture}
\end{center}
\begin{theorem}\label{d.1}
Consider the cartesian product $C_n\Box P_k$. If $n\geq3$ is an odd integer, then the minimum size of a doubly resolving set of vertices for the cartesian product $C_n\Box P_k$ is $3$.
\end{theorem}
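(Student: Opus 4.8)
The plan is to prove the two inequalities $\psi(C_n\Box P_k)\ge 3$ and $\psi(C_n\Box P_k)\le 3$ separately. For the lower bound, I would first recall that every doubly resolving set is a resolving set (take $\lambda=0$ in the defining condition, so that $r(x|Q)\neq r(y|Q)$ whenever $x\neq y$); hence $\psi(C_n\Box P_k)\ge\beta(C_n\Box P_k)=2$ by Theorem~\ref{b.6}. To rule out the value $2$, I would use the observation that a two-element set $\{q_1,q_2\}$ is doubly resolving \emph{if and only if} the map $f\colon V(C_n\Box P_k)\to\mathbb{Z}$ given by $f(v)=d(v,q_1)-d(v,q_2)$ is injective, since $r(x|\{q_1,q_2\})-r(y|\{q_1,q_2\})=\lambda I$ is equivalent to $f(x)=f(y)$. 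By the triangle inequality $f$ takes values only in $\{-d(q_1,q_2),\dots,d(q_1,q_2)\}$, a set of at most $2\,\mathrm{diam}(C_n\Box P_k)+1$ integers. Since $\mathrm{diam}(C_n\Box P_k)=\lfloor n/2\rfloor+(k-1)=\tfrac{n-1}{2}+k-1$ for odd $n$, this count equals $n+2k-2$, and $nk-(n+2k-2)=(n-2)(k-1)>0$ whenever $n\ge 3$ and $k\ge 3$. Thus $|V(C_n\Box P_k)|=nk$ exceeds the size of the image of $f$, so $f$ cannot be injective and no two-element doubly resolving set exists.

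For the upper bound I would exhibit an explicit doubly resolving set of size $3$ that decouples the cycle direction from the path direction. Using the description of $C_n\Box P_k$ on layers $V_1,\dots,V_k$ given above, recall that if $x$ lies in layer $p$ at cyclic position $a$ and $y$ lies in layer $p'$ at cyclic position $a'$, then $d(x,y)=d_{C_n}(a,a')+|p-p'|$. Since $n$ is odd, Remark~\ref{b.5} provides a two-element doubly resolving set $\{a_1,a_2\}$ of $C_n$, which is equivalent to saying that the map $a\mapsto d_{C_n}(a,a_1)-d_{C_n}(a,a_2)$ is injective on the cycle. I would then take $q_1,q_2\in V_1$ at cyclic positions $a_1,a_2$ and $q_3\in V_k$ at cyclic position $a_1$ (in the $x$-labelling of the paper, $q_1=x_{a_1}$, $q_2=x_{a_2}$, $q_3=x_{(k-1)n+a_1}$), and set $Q=\{q_1,q_2,q_3\}$; these three vertices are pairwise distinct because $a_1\neq a_2$ and $k\ge 2$.

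The verification is a short computation. For a vertex $x$ in layer $p$ at position $a$ one gets $d(x,q_1)-d(x,q_2)=d_{C_n}(a,a_1)-d_{C_n}(a,a_2)$, which depends only on $a$, and $d(x,q_1)-d(x,q_3)=(p-1)-(k-p)=2p-1-k$, which depends only on $p$ and is strictly monotone in $p$. Consequently, if $r(x|Q)-r(y|Q)=\lambda I$ for an integer $\lambda$, then taking differences of coordinates yields $d(x,q_1)-d(x,q_2)=d(y,q_1)-d(y,q_2)$ and $d(x,q_1)-d(x,q_3)=d(y,q_1)-d(y,q_3)$; the first equation forces $x$ and $y$ to share the same cyclic position by injectivity of the map from Remark~\ref{b.5}, and the second forces them into the same layer, so $x=y$. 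Hence $Q$ is doubly resolving and $\psi(C_n\Box P_k)\le 3$, which together with the lower bound gives $\psi(C_n\Box P_k)=3$.

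The two difference computations and the distance formula are routine; the step that actually carries the argument is the lower bound, where one must phrase the obstruction correctly (a two-set is doubly resolving exactly when the difference of its two distance functions is injective) and then compare $nk$ with the diameter bound. I expect that to be the only real, though still mild, obstacle, since it is the place where the hypotheses $n\ge 3$ and $k\ge 3$ enter in an essential way.
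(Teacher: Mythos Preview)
Your argument is correct. The upper bound construction coincides with the paper's: the paper takes $A_i=\{x_i,x_{\lceil n/2\rceil+i-1},x_c\}$ with $x_c$ the vertex compatible to $x_i$ in layer $V_k$, which is exactly your $Q=\{q_1,q_2,q_3\}$ once one notes that $\{i,\lceil n/2\rceil+i-1\}$ is a minimum doubly resolving set of the odd cycle. Your verification is in fact more complete than the paper's: the paper argues only that compatible pairs $x_e,x_d$ (same cyclic position, different layers) are doubly resolved by $A_i$, taking for granted that those are the only problematic pairs for $M_i$, whereas you show directly that the two coordinate differences $d(\cdot,q_1)-d(\cdot,q_2)$ and $d(\cdot,q_1)-d(\cdot,q_3)$ recover cyclic position and layer respectively, which handles all pairs at once.

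The lower bound is where you take a genuinely different route. The paper first asserts (without proof) a classification of all two-element resolving sets of $C_n\Box P_k$ when $n$ is odd---they must lie in $V_1$ or $V_k$ and be of the form $M_i$ or $N_j$---and then eliminates each such set by exhibiting a compatible pair that violates double resolution. Your pigeonhole argument bypasses this classification entirely: the map $v\mapsto d(v,q_1)-d(v,q_2)$ has range contained in an interval of length $2\,\mathrm{diam}(C_n\Box P_k)+1=n+2k-2<nk$, so it can never be injective. This is shorter, self-contained, and shows transparently where the hypotheses $n\ge 3$, $k\ge 2$ enter (indeed $(n-2)(k-1)>0$ already for $k\ge 2$). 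The paper's approach, by contrast, gives more structural information about the minimum resolving sets, which it reuses in the later results on $(C_n\Box P_k)\Box P_m$.
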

\begin{proof}
In the following cases we show that the minimum size of a doubly resolving set of vertices for the cartesian product $C_n\Box P_k$ is $3$.\\

Case 1. First, we show that the minimum size of a doubly resolving set of vertices in $C_n\Box P_k$ must be greater than $2$.
Consider the cartesian product $C_n\Box P_k$ with the vertex set  $\{x_1,  ..., x_{nk}\}$ on the layers
$V_1, V_2, ..., V_k$, is defined already. Based on  Theorem \ref{b.6}, we know that $\beta(C_n\Box P_k)=2$.
We can show that if $n$ is an odd integer then all the elements of every minimum resolving set of  vertices in $C_n\Box P_k$ must be lie in
exactly one of the congruous layers $V_1$ or $V_k$. Without lack of theory if we consider the layer $V_1$ of the cartesian product
$C_n\Box P_k$ then  we can show that all the minimum resolving sets of vertices in the layer $V_1$ of $C_n\Box P_k$ are the sets as to form
$M_i=\{x_i, x_{\lceil\frac{n}{2}\rceil+i-1}\}$, $1\leq i\leq \lceil\frac{n}{2}\rceil$ and $N_j=\{x_j, x_{\lceil\frac{n}{2}\rceil+j}\}$,  $1\leq j\leq \lfloor\frac{n}{2}\rfloor$.
On the other hand, we can see that the arranged subsets $M_i$ of vertices in $C_n\Box P_k$ cannot be doubly resolving sets for $C_n\Box P_k$ because for
$1\leq i\leq \lceil\frac{n}{2}\rceil$ and  two compatible vertices $x_{i+n}$ and $x_{i+2n}$ with respect to $x_i$, we have
$r(x_{i+n}|M_i)-r(x_{i+2n}|M_i)=-I$, where $I$ indicates the unit $2$-vector $(1, 1)$. By applying the same argument we can show that
the arranged subsets $N_j$ of vertices in $C_n\Box P_k$ cannot be  doubly resolving sets for $C_n\Box P_k$. Hence, the minimum size of a doubly resolving set in $C_n\Box P_k$
must be greater than $2$.\\

Case 2. Now, we show that the minimum size of a doubly resolving set of vertices in  the cartesian product $C_n\Box P_k$ is $3$.
For $1\leq i\leq \lceil\frac{n}{2}\rceil$, let $x_i$ be a  vertex in the layer $V_1$ of $C_n\Box P_k$ and $x_c$ be a compatible
vertex with  respect to $x_i$, where $x_c$ lie in the layer $V_k$ of $C_n\Box P_k$, then we can show that
the arranged subsets $A_i=M_i\cup x_c=\{x_i, x_{\lceil\frac{n}{2}\rceil+i-1}, x_c\}$ of vertices in  the cartesian product $C_n\Box P_k$ are
doubly resolving sets with the minimum size for the cartesian product $C_n\Box P_k$. It will be enough to show that for any compatible vertices $x_e$
and $x_d$ in $C_n\Box P_k$, $r(x_e|A_i)-r(x_d|A_i)\neq\lambda I$. Suppose  $x_e\in V_p$ and $x_d\in V_q$ are compatible vertices in the
cartesian product $C_n\Box P_k$, $1\leq p<q\leq k$. Hence, $r(x_e|M_i)-r(x_d|M_i)=-\lambda I$, where $\lambda$ is a positive integer, and $I$
indicates the unit $2$-vector $(1, 1)$. Also, for the compatible vertex $x_c$ with  respect to $x_i$,  $r(x_e|x_c)-r(x_d|x_c)=\lambda$. So, $r(x_e|A_i)-r(x_d|A_i)\neq\lambda I$,  where $I$ indicates the unit $3$-vector $(1, 1, 1)$. Especially, for $1\leq j\leq \lfloor\frac{n}{2}\rfloor$
if we consider the arranged subsets $B_j=N_j\cup x_c=\{x_j, x_{\lceil\frac{n}{2}\rceil+j}, x_c\}$ of vertices in the cartesian product $C_n\Box P_k$,
where $x_c$ lie in the layer $V_k$ of the cartesian product $C_n\Box P_k$ and $x_c$ is a compatible vertex with  respect to $x_j$, then by applying
the same argument we can show that the arranged subsets $B_j=N_j\cup x_c=\{x_j, x_{\lceil\frac{n}{2}\rceil+j}, x_c\}$ of vertices in the cartesian product $C_n\Box P_k$  are doubly  resolving sets with the minimum size
for the cartesian product $C_n\Box P_k$.\newline
\end{proof}
\begin{theorem}\label{f.1}
Consider the cartesian product $C_n\Box P_k$. If $n\geq3$ is an odd integer, then the  minimum size of a resolving set of vertices in $(C_n\Box P_k)\Box P_2$ is  $3$.
\end{theorem}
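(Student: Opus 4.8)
The plan is to establish the two inequalities $\beta\big((C_n\Box P_k)\Box P_2\big)\le 3$ and $\beta\big((C_n\Box P_k)\Box P_2\big)\ge 3$ separately. Throughout I write $G=C_n\Box P_k$ and $K=G\Box P_2$, and I use $G^{(1)},G^{(2)}$ for the two copies of $G$ inside $K$ as in the construction above, so that $d_K\big(x_p^{(\alpha)},x_q^{(\beta)}\big)=d_G(x_p,x_q)+|\alpha-\beta|$ for all vertices. For a fixed pair $\{x_s,x_t\}\subseteq V(G)$ and $x\in V(G)$, abbreviate $f(x)=\big(d_G(x,x_s),d_G(x,x_t)\big)$. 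The upper bound is essentially free: since $\beta(C_n\Box P_k)=2$ by Theorem~\ref{b.6}, the inequality $\beta(G\Box P_2)\le\beta(G)+1$ of Chartrand et al.\ \cite{pap-chr-1} gives $\beta(K)\le 3$. Equivalently, taking a doubly resolving set $A$ of $C_n\Box P_k$ with $|A|=\psi(C_n\Box P_k)=3$, which exists by Theorem~\ref{d.1}, one checks that $A^{(1)}:=\{a^{(1)}:a\in A\}$ resolves $K$: two vertices in the same copy are separated because $A$ resolves $G$, and a coincidence $r\big(x_p^{(1)}\,|\,A^{(1)}\big)=r\big(x_q^{(2)}\,|\,A^{(1)}\big)$ would force $r(x_p\,|\,A)-r(x_q\,|\,A)=I$, which is impossible for a doubly resolving set.

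For the lower bound I must show that no $2$-element set resolves $K$. Suppose $W=\{x_s^{(i)},x_t^{(j)}\}$ with $i,j\in\{1,2\}$ does. The first step is to observe that then $\{x_s,x_t\}$ already resolves $G$: if distinct $x_p,x_q\in V(G)$ had $f(x_p)=f(x_q)$, then $x_p^{(1)}$ and $x_q^{(1)}$ would receive equal codes in $K$ (the layer offsets $|1-i|$ and $|1-j|$ contribute the same to $x_p^{(1)}$ and $x_q^{(1)}$), and one checks both of these vertices lie outside $W$ — contradicting that $W$ resolves $K$. Hence $\{x_s,x_t\}$ is a \emph{minimum} resolving set of $C_n\Box P_k$; in particular $x_s\neq x_t$.

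Now I would split according to whether the two landmarks of $W$ lie in the same copy of $G$. \emph{Case $i\neq j$.} Let $y$ be the neighbour of $x_s$ in $G$ on a shortest $x_s$--$x_t$ path, so $d_G(y,x_s)=1$ and $d_G(y,x_t)=d_G(x_s,x_t)-1$; a one-line computation with the distance formula then gives $r\big(y^{(i)}\,|\,W\big)=r\big(x_s^{(j)}\,|\,W\big)=\big(1,\,d_G(x_s,x_t)\big)$, while $y^{(i)}\neq x_s^{(j)}$ since $i\neq j$ — contradiction. (This case uses nothing about $C_n\Box P_k$ beyond $x_s\neq x_t$.) \emph{Case $i=j$.} Here I invoke the structural fact established in the proof of Theorem~\ref{d.1}: for odd $n$, the two vertices of a minimum resolving set of $C_n\Box P_k$ lie in a common layer, necessarily $V_1$ or $V_k$; say $V_1$, the case $V_k$ being symmetric under the automorphism $V_p\leftrightarrow V_{k+1-p}$. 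Let $u$ be the vertex of $V_2$ compatible with $x_s$ (it exists as $k\ge 2$). Since $x_s,x_t\in V_1$, the distance formula gives $d_G(u,x_s)=1$ and $d_G(u,x_t)=d_G(x_s,x_t)+1$, so $u$ in copy $i$ and $x_s$ in the other copy receive the same code $\big(1,\,d_G(x_s,x_t)+1\big)$ — again a contradiction. (Alternatively: $\{x_s,x_t\}$ has size $2<3=\psi(C_n\Box P_k)$, hence is not doubly resolving, and the witness pair exhibited in the proof of Theorem~\ref{d.1} has code-difference $\pm I$ relative to $\{x_s,x_t\}$ and collides across the two copies in the same manner.) Thus no $2$-set resolves $K$, so $\beta(K)\ge 3$, and combined with the upper bound $\beta\big((C_n\Box P_k)\Box P_2\big)=3$.

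The step I expect to be the real obstacle is Case $i=j$: that is the only place where one genuinely needs to know \emph{where} the minimum resolving sets of $C_n\Box P_k$ live — namely, for odd $n$, inside the boundary layers $V_1,V_k$ — which in turn rests on the analysis behind Theorems~\ref{b.6} and~\ref{d.1}. By contrast, Case $i\neq j$ and the upper bound are formal consequences of the product distance formula together with the values $\beta(C_n\Box P_k)=2$ and $\psi(C_n\Box P_k)=3$.
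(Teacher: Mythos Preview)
Your argument is correct. For the upper bound you proceed exactly as the paper does: place a size-$3$ doubly resolving set of $C_n\Box P_k$ (from Theorem~\ref{d.1}) in one copy and observe that passing to the other copy shifts every coordinate by $1$, so distinct representations stay distinct.

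For the lower bound you actually do more than the paper. The paper's proof only asserts that ``none of the minimal resolving sets of $C_n\Box P_k$'' resolve $(C_n\Box P_k)\Box P_2$ and concludes $\beta\ge 3$ from that; strictly speaking this leaves open the possibility of a $2$-element resolving set whose landmarks sit in different copies, or whose projection to $G$ is not one of the listed minimum resolving sets. Your case split $i=j$ versus $i\neq j$ closes that gap: in the cross-copy case you exhibit an explicit collision using a neighbour on a shortest $x_s$--$x_t$ path (a purely formal product argument), and in the same-copy case you invoke the structural fact from the proof of Theorem~\ref{d.1} that for odd $n$ every minimum resolving pair of $C_n\Box P_k$ lies in $V_1$ or $V_k$, then collide the compatible vertex in the adjacent layer with $x_s$ in the other copy. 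Both approaches ultimately rest on the same ingredients (Theorems~\ref{b.6} and~\ref{d.1}), but your lower bound is the one that is genuinely complete.
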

\begin{proof}
Suppose $V((C_n\Box P_k)\Box P_2)=\{x_1^{(1)},  ..., x_{nk}^{(1)}\}\cup \{x_1^{(2)},  ..., x_{nk}^{(2)}\}$. Based on Theorem \ref{b.6}, we know that if $n\geq3$ is an odd integer, then the  minimum size of a resolving set of vertices in $C_n\Box P_k$ is $2$. Also, by definition of $(C_n\Box P_k)\Box P_2$ we can verify that for $1\leq t \leq nk$, the vertex $x_t^{(1)}$ is adjacent to the vertex $x_t^{(2)}$ in $(C_n\Box P_k)\Box P_2$, and hence none of the minimal resolving sets of $C_n\Box P_k$ cannot be a resolving set for $(C_n\Box P_k)\Box P_2$. Therefore, the  minimum size of a resolving set of vertices in $(C_n\Box P_k)\Box P_2$ must be greater than $2$.
Now, we show that the  minimum size of a resolving set of vertices in $(C_n\Box P_k)\Box P_2$ is  $3$.
For $1\leq i\leq \lceil\frac{n}{2}\rceil$, let $x_i^{(1)}$ be a  vertex in the layer $V_1^{(1)}$ of $(C_n\Box P_k)\Box P_2$ and $x_c^{(1)}$ be a compatible  vertex with  respect to $x_i^{(1)}$, where $x_c^{(1)}$ lie in the layer $V_k^{(1)}$ of $(C_n\Box P_k)\Box P_2$. Based on Theorem \ref{d.1}, we know that for $1\leq i\leq \lceil\frac{n}{2}\rceil$, $1^{th}$ copy of the  arranged subsets
$A_i=\{x_i, x_{\lceil\frac{n}{2}\rceil+i-1}, x_c\}$, denoted by the sets
$A_i^{(1)}=\{x_i^{(1)}, x_{\lceil\frac{n}{2}\rceil+i-1}^{(1)}, x_c^{(1)}\}$ of vertices  of $(C_n\Box P_k)\Box P_2$ are  doubly resolving  sets for  the arranged set  $\{x_1^{(1)},  ..., x_{nk}^{(1)}\}$ of vertices of $(C_n\Box P_k)$, and hence  the arranged sets
$A_i^{(1)}=\{x_i^{(1)}, x_{\lceil\frac{n}{2}\rceil+i-1}^{(1)}, x_c^{(1)}\}$ of vertices  of $(C_n\Box P_k)\Box P_2$ are   resolving  sets for $(C_n\Box P_k)\Box P_2$, because for each vertex in the set $\{x_1^{(2)},  ..., x_{nk}^{(2)}\}$ of vertices of $(C_n\Box P_k)\Box P_2$, we have
$$r(x_t^{(2)}|A_i^{(1)})=( d(x_t^{(1)}, x_i^{(1)})+1, d(x_t^{(1)}, x_{\lceil\frac{n}{2}\rceil+i-1}^{(1)})+1, d(x_t^{(1)}, x_{c}^{(1)})+1),$$ so   all the vertices  of $(C_n\Box P_k)\Box P_2$  have various representations with respect to the sets $A_i^{(1)}$, and hence the  minimum size of a resolving set of vertices in $(C_n\Box P_k)\Box P_2$ is $3$.
In the same way for $1\leq j\leq \lfloor\frac{n}{2}\rfloor$, if we consider $1^{th}$ copy the  arranged subsets $B_j=\{x_j, x_{\lceil\frac{n}{2}\rceil+j}, x_c\}$, denoted by the sets
$B_j^{(1)}=\{x_j^{(1)}, x_{\lceil\frac{n}{2}\rceil+j}^{(1)}, x_c^{(1)}\}$ of vertices  of $(C_n\Box P_k)\Box P_2$, where $x_c^{(1)}$ lie in the  layer $V_k^{(1)}$ of $(C_n\Box P_k)\Box P_2$ and $x_c^{(1)}$ is a compatible  vertex with  respect to $x_j^{(1)}$, then by applying the same argument
we can show that the arranged sets $B_j^{(1)}=\{x_j^{(1)}, x_{\lceil\frac{n}{2}\rceil+j}^{(1)}, x_c^{(1)}\}$ of vertices   of $(C_n\Box P_k)\Box P_2$ are  resolving sets  for $(C_n\Box P_k)\Box P_2$.
 \end{proof}

\begin{lemma}\label{f.2}
Consider the cartesian product $C_n\Box P_k$. If $n\geq3$ is an odd integer, then the  minimum size of a doubly resolving set of vertices in $(C_n\Box P_k)\Box P_2$ is greater than $3$.
\end{lemma}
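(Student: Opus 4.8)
The plan is to argue by contradiction. Since a doubly resolving set is in particular resolving, Theorem~\ref{f.1} gives that every doubly resolving set of $(C_n\Box P_k)\Box P_2$ has size at least $3$, so it suffices to rule out doubly resolving sets of size exactly $3$. Suppose $Q=\{u_1,u_2,u_3\}$ were one. I would write each vertex of $(C_n\Box P_k)\Box P_2$ as a pair $(x,i)$ with $x\in V(C_n\Box P_k)$ and $i\in\{1,2\}$, so that $d\bigl((x,i),(y,j)\bigr)=d_{C_n\Box P_k}(x,y)+|i-j|$, and put $u_\ell=(P_\ell,i_\ell)$, writing $c_\ell$ for the $C_n$-coordinate of $P_\ell$.

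First come two reductions. Applied to a pair $\{(x,1),(x,2)\}$, the doubly resolving condition forces $r((x,1)|Q)-r((x,2)|Q)$ not to be a constant vector; but its $\ell$-th entry equals $-1$ if $i_\ell=1$ and $+1$ if $i_\ell=2$, so the $u_\ell$ cannot all lie in one copy of $C_n\Box P_k$. Interchanging the two copies if necessary, I may assume $u_1,u_2$ lie in copy $1$ and $u_3$ in copy $2$. Next, for vertices $(x,1),(y,1)$ of copy $1$ the summands $1$ coming from $u_3$ cancel, so $r((x,1)|Q)-r((y,1)|Q)=\bigl(d_{C_n\Box P_k}(x,P_\ell)-d_{C_n\Box P_k}(y,P_\ell)\bigr)_\ell$; requiring this to be non-constant for all $x\ne y$ says exactly that $\{P_1,P_2,P_3\}$ is a doubly resolving set of $C_n\Box P_k$, hence, by Theorem~\ref{d.1}, a minimum one. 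In particular $P_1,P_2,P_3$ cannot all share one $P_k$-layer, nor all share one $C_n$-coordinate.

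It remains to produce a pair that $Q$ does not doubly resolve; by the previous paragraph it cannot lie inside a single copy, so I look for a mixed pair $\{(a,1),(a',2)\}$ with $a\ne a'$. Writing $\Delta_\ell=d_{C_n\Box P_k}(a,P_\ell)-d_{C_n\Box P_k}(a',P_\ell)$, the vector $r((a,1)|Q)-r((a',2)|Q)$ has entries $\Delta_1-1,\ \Delta_2-1,\ \Delta_3+1$, which is constant precisely when
\[
\Delta_1=\Delta_2=\Delta_3+2 .
\]
So the task is to find $a\ne a'$ in $C_n\Box P_k$ with $d(a,P_1)-d(a',P_1)=d(a,P_2)-d(a',P_2)=d(a,P_3)-d(a',P_3)+2$. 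The clean case: if $P_3$ has a neighbour $w$ in $C_n\Box P_k$ that lies on a shortest path from $P_3$ to $P_1$ and also on a shortest path from $P_3$ to $P_2$, then taking $a=P_3$, $a'=w$ gives $\Delta_1=\Delta_2=1$ and $\Delta_3=-1$, so this pair is not doubly resolved. Because intervals in a Cartesian product factor as products, such a ``common descent'' neighbour of $P_3$ exists whenever the $P_k$-layer of $P_3$ lies outside the closed interval spanned by the $P_k$-layers of $P_1$ and $P_2$ (step once in the $P_k$-direction), or the $C_n$-coordinates of $P_1$ and $P_2$ lie on a common arc out of that of $P_3$ (step once around $C_n$). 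Using the reflection of $P_k$ and a rotation/reflection of $C_n$ to normalise, this settles every configuration except the one in which $P_3$ is strictly surrounded by $P_1$ and $P_2$ in both factors.

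That last configuration is the main obstacle: the $P_k$-layer of $P_3$ strictly between those of $P_1$ and $P_2$, and $c_3$ separating $c_1$ and $c_2$ on the cycle. Here no single edge move works, and the idea is to combine a step in the $P_k$-direction taken between the layers of $P_1$ and $P_3$ (so that $d(\cdot,P_3)$ drops by $1$ while $d(\cdot,P_1)$ rises by $1$, hence the gap of $2$ between $\Delta_1$ and $\Delta_3$ is already created) with a cyclic displacement $c'\mapsto c$ chosen so that the integer-valued function $c\mapsto d_{C_n}(c,c_2)-d_{C_n}(c,c_3)$ — which changes by $0$ or $\pm2$ along each edge of $C_n$ and attains the values $\pm d_{C_n}(c_2,c_3)$ — changes by exactly $2$ between $c'$ and $c$, so as to enforce $\Delta_1=\Delta_2$ as well. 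Making the two moves compatible simultaneously is the step I expect to be most delicate, and it is here that one must feed in the fine structure of minimum doubly resolving sets of $C_n\Box P_k$ coming out of the proof of Theorem~\ref{d.1}.
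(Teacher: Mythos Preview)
Your opening reductions are sound and sharper than anything the paper does: the observation that the three landmarks cannot all lie in one $P_2$-fibre, and the further step showing that the projections $\{P_1,P_2,P_3\}$ must themselves doubly resolve $C_n\Box P_k$, are both correct and genuinely useful.

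The proof is, however, incomplete exactly where you say it is. In the ``strictly surrounded'' configuration---$p_3$ strictly between $p_1$ and $p_2$ on $P_k$, and $c_1,c_2$ on opposite arcs of $C_n$ from $c_3$---you do not actually produce the pair $(a,a')$; you only sketch a two-step idea and appeal to ``the fine structure of minimum doubly resolving sets of $C_n\Box P_k$ coming out of the proof of Theorem~\ref{d.1}''. That proof, as written in the paper, only \emph{exhibits} particular minimum doubly resolving sets $A_i,B_j$; it does not classify all of them, so it does not hand you the structural constraint you need. As it stands your argument stops at its hardest case, and nothing earlier forces $\{P_1,P_2,P_3\}$ to avoid the surrounded configuration.

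The paper's own proof is very different and much shorter. It does not try to rule out an arbitrary $3$-set at all: it simply observes that the specific resolving sets $A_i^{(1)}$ and $B_j^{(1)}$ constructed in Theorem~\ref{f.1} all lie inside a single copy of $C_n\Box P_k$, so that $r(x_t^{(2)}\mid A_i^{(1)})=r(x_t^{(1)}\mid A_i^{(1)})+I$ for every $t$, and hence none of them is doubly resolving. In effect the paper is checking only your first reduction (``not all in one copy''), and only against a preferred list of candidates; it tacitly treats those $A_i^{(1)},B_j^{(1)}$ as the only $3$-element sets that need to be examined. Your framework is more robust, but to finish it you must either carry the surrounded case through directly, or show that no minimum doubly resolving triple $\{P_1,P_2,P_3\}$ of $C_n\Box P_k$ can have one point strictly surrounded by the other two in both factors.
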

\begin{proof}
Suppose $V((C_n\Box P_k)\Box P_2)=\{x_1^{(1)},  ..., x_{nk}^{(1)}\}\cup \{x_1^{(2)},  ..., x_{nk}^{(2)}\}$ and $1\leq t\leq nk$.
For $1\leq i\leq \lceil\frac{n}{2}\rceil$, let $x_i^{(1)}$ be a vertex in the layer $V_1^{(1)}$ of $(C_n\Box P_k)\Box P_2$ and $x_c^{(1)}$ be a compatible  vertex with  respect to $x_i^{(1)}$, where $x_c^{(1)}$ lie in the  layer $V_k^{(1)}$ of $(C_n\Box P_k)\Box P_2$. Based on proof of Theorem  \ref{f.1}, we  know that the arranged sets $A_i^{(1)}=\{x_i^{(1)}, x_{\lceil\frac{n}{2}\rceil+i-1}^{(1)}, x_c^{(1)}\}$ of vertices   of $(C_n\Box P_k)\Box P_2$ cannot be doubly resolving sets for $(C_n\Box P_k)\Box P_2$, because
$$r(x_t^{(2)}|A_i^{(1)})=( d(x_t^{(1)}, x_i^{(1)})+1, d(x_t^{(1)}, x_{\lceil\frac{n}{2}\rceil+i-1}^{(1)})+1, d(x_t^{(1)}, x_{c}^{(1)})+1).$$
In the same way for $1\leq j\leq \lfloor\frac{n}{2}\rfloor$,  if we consider the arranged sets $B_j^{(1)}=\{x_j^{(1)}, x_{\lceil\frac{n}{2}\rceil+j}^{(1)}, x_c^{(1)}\}$ of vertices  of $(C_n\Box P_k)\Box P_2$, where $x_c^{(1)}$ lie in the layer $V_k^{(1)}$ of $(C_n\Box P_k)\Box P_2$ and $x_c^{(1)}$ is a compatible  vertex with  respect to $x_j^{(1)}$, then we can show that the arranged sets $B_j^{(1)}$
cannot be doubly resolving sets for $(C_n\Box P_k)\Box P_2$. Hence the  minimum size of a  doubly resolving set of vertices in $(C_n\Box P_k)\Box P_2$ is  greater than $3$.
\end{proof}
\begin{theorem}\label{f.4}
Consider the cartesian product $C_n\Box P_k$. If $n\geq3$ is an odd integer, then the minimum size of a doubly resolving set of vertices in $(C_n\Box P_k)\Box P_2$ is  $4$.
\end{theorem}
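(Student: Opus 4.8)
The lower bound $\psi\big((C_n\Box P_k)\Box P_2\big)\ge 4$ is already available from Lemma \ref{f.2}, so the plan is to produce an explicit doubly resolving set of size $4$. The natural candidate is built from one of the size-$3$ doubly resolving sets of the first copy: for $1\le i\le\lceil\frac n2\rceil$ take $A_i^{(1)}=\{x_i^{(1)},x_{\lceil\frac n2\rceil+i-1}^{(1)},x_c^{(1)}\}$ as in the proof of Theorem \ref{f.1} and adjoin a single vertex of the second copy, e.g. $Q=A_i^{(1)}\cup\{x_c^{(2)}\}$ (in fact any vertex of $\{x_1^{(2)},\dots,x_{nk}^{(2)}\}$ will do). I would then prove that $Q$ is a doubly resolving set; together with Lemma \ref{f.2} this gives $\psi=4$, and the argument applies verbatim to $B_j^{(1)}\cup\{x_c^{(2)}\}$.

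The computation rests on the distance identities in $(C_n\Box P_k)\Box P_2$: for every vertex $y$ of $C_n\Box P_k$ one has $d(x_t^{(1)},y^{(1)})=d(x_t^{(2)},y^{(2)})=d_{C_n\Box P_k}(x_t,y)$ and $d(x_t^{(1)},y^{(2)})=d(x_t^{(2)},y^{(1)})=d_{C_n\Box P_k}(x_t,y)+1$, since a shortest path joining the two copies uses exactly one vertical edge. Hence $r(x_t^{(1)}|A_i^{(1)})=r(x_t|A_i)$ and $r(x_t^{(2)}|A_i^{(1)})=r(x_t|A_i)+I$, where the right-hand sides are computed inside $C_n\Box P_k$ (this is exactly the shift observed in Lemma \ref{f.2}), and the coordinate coming from $x_c^{(2)}$ equals $d_{C_n\Box P_k}(x_t,x_c)$ on copy $2$ and $d_{C_n\Box P_k}(x_t,x_c)+1$ on copy $1$.

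I would then check $r(u|Q)-r(v|Q)\ne\lambda I$ for all distinct $u,v$ by cases. If $u,v$ both lie in copy $1$, or both lie in copy $2$, the first three coordinates already suffice: their difference equals $r(u_0|A_i)-r(v_0|A_i)$ for the corresponding vertices $u_0,v_0$ of $C_n\Box P_k$ (the $+I$ shift cancels in the copy-$2$ case), which is not $\lambda I$ because $A_i$ doubly resolves $C_n\Box P_k$ by Theorem \ref{d.1}. If $u=x_e^{(1)}$ and $v=x_d^{(2)}$ with $e\ne d$, the first three coordinates give $r(x_e|A_i)-r(x_d|A_i)-I$; were this $\lambda I$ we would get $r(x_e|A_i)-r(x_d|A_i)=(\lambda+1)I$, forcing $x_e=x_d$, a contradiction. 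The only surviving pairs are the vertical ones $u=x_t^{(1)},\,v=x_t^{(2)}$: here the first three coordinates give $-I$, while the fourth coordinate is $d(x_t^{(1)},x_c^{(2)})-d(x_t^{(2)},x_c^{(2)})=+1$, so the difference vector is $(-1,-1,-1,1)$, which is not a multiple of $I$.

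The main obstacle — and the reason the answer is $4$ and not $3$ — is precisely these vertical pairs $\{x_t^{(1)},x_t^{(2)}\}$: any three-vertex set lying entirely in one copy produces representations that differ by the constant vector $I$ between the two copies, so it cannot doubly resolve such pairs (Lemma \ref{f.2}); consequently the fourth vertex \emph{must} be taken from the other copy, and the real content of the proof is verifying that this extra vertex does not re-create a $\lambda I$ defect among the cross-copy pairs with distinct labels, which the distance identities above settle cleanly. A minor routine point to watch is the degenerate case $t=c$ in the vertical-pair computation, where $d(x_c^{(1)},x_c^{(2)})=1$ and $d(x_c^{(2)},x_c^{(2)})=0$, so the fourth coordinate is still $1$ and the conclusion is unaffected.
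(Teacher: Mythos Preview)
Your proposal is correct and follows essentially the same route as the paper: the paper also takes $C_i=A_i^{(1)}\cup\{x_c^{(2)}\}$ and checks the vertical pairs $x_t^{(1)},x_t^{(2)}$ via the same $(-1,-1,-1,+1)$ computation. Your write-up is in fact more complete than the paper's, since you explicitly dispose of the same-copy and cross-copy-with-distinct-labels cases using the distance identities, whereas the paper simply asserts that handling the vertical pairs is ``enough'' without spelling out why the remaining cases are automatic.
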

\begin{proof}
Suppose $V((C_n\Box P_k)\Box P_2)=\{x_1^{(1)},  ..., x_{nk}^{(1)}\}\cup \{x_1^{(2)},  ..., x_{nk}^{(2)}\}$ and $1\leq t\leq nk$.
Based on  Theorem \ref{f.1}, we know that if $n\geq3$ is an odd integer, then $\beta((C_n\Box P_k)\Box P_2)=3$ and by Lemma \ref{f.2}, we know that the  minimum size of a  doubly resolving set of vertices in $(C_n\Box P_k)\Box P_2$ is greater than $3$. In particular,
it is well known that $\beta((C_n\Box P_k)\Box P_2)\leq \psi((C_n\Box P_k)\Box P_2)$. Now, we show that if $n\geq3$ is an odd integer, then the  minimum size of a doubly resolving set of vertices in $(C_n\Box P_k)\Box P_2$ is  $4$.
For $1\leq i\leq \lceil\frac{n}{2}\rceil$, let $x_i^{(1)}$ be a  vertex in the layer $V_1^{(1)}$ of $(C_n\Box P_k)\Box P_2$ and $x_c^{(1)}$ be a compatible  vertex with  respect to $x_i^{(1)}$, where $x_c^{(1)}$ lie in the layer $V_k^{(1)}$ of $(C_n\Box P_k)\Box P_2$. Based on Lemma \ref{f.2}, we know that the arranged sets
$A_i^{(1)}=\{x_i^{(1)}, x_{\lceil\frac{n}{2}\rceil+i-1}^{(1)}, x_c^{(1)}\}$ of vertices  of $(C_n\Box P_k)\Box P_2$, defined already,  cannot be
doubly resolving sets for  $(C_n\Box P_k)\Box P_2$. Let, $C_i=A_i^{(1)}\cup x_c^{(2)}=\{x_i^{(1)}, x_{\lceil\frac{n}{2}\rceil+i-1}^{(1)}, x_c^{(1)}, x_c^{(2)}\}$ be an arranged subset of vertices of $(C_n\Box P_k)\Box P_2$, where $x_c^{(2)}$  lie in the layer $V_k^{(2)}$ of $(C_n\Box P_k)\Box P_2$ and the vertex $x_c^{(2)}$ is adjacent to the vertex $x_c^{(1)}$. We show that the  arranged  subset $C_i$ is  a doubly resolving set of vertices in $(C_n\Box P_k)\Box P_2$.
It will be enough to show that  for any adjacent vertices $x_t^{(1)}$ and $x_t^{(2)}$, $r(x_t^{(1)}|C_i)-r(x_t^{(2)}|C_i)\neq - I$,  where $I$ indicates the unit $4$-vector $(1, ..., 1)$.
We can verify that, $r(x_t^{(1)}|A_i^{(1)})-r(x_t^{(2)}|A_i^{(1)})= - I$, where $I$ indicates the unit $3$-vector, and $r(x_t^{(1)}|x_c^{(2)})-r(x_t^{(2)}|x_c^{(2)})= 1$.  Thus the  minimum size of a doubly resolving set of vertices in $(C_n\Box P_k)\Box P_2$  is $4$.
\end{proof}
\begin{conclusion}\label{f.4.1}
Consider the cartesian product $C_n\Box P_k$. If $n\geq3$ is an odd integer, then the minimum size of a doubly resolving set of vertices in $(C_n\Box P_k)\Box P_m$ is  $4$.
\end{conclusion}
\begin{proof}
Suppose  $(C_n\Box P_k)\Box P_m$ is a graph with vertex set $\{x_1^{(1)},  ..., x_{nk}^{(1)}\}\cup \{x_1^{(2)},  ..., x_{nk}^{(2)}\}\cup ... \cup\{x_1^{(m)},  ..., x_{nk}^{(m)}\}$ so that for $1\leq t \leq nk$, the vertex $x_t^{(r)}$ is adjacent to $x_t^{(r+1)}$ in $(C_n\Box P_k)\Box P_m$, for $1\leq r\leq m-1$. On the other hand we know that for every connected graph $G$ and the path $P_m$, $\beta(G \Box P_m)\leq \beta(G)+1$. So, by considering $G=(C_n\Box P_k)$ we have $\beta(G \Box P_m)\leq \beta(G)+1=3$. Moreover it is not hard to see that
for $1\leq i\leq \lceil\frac{n}{2}\rceil$,  the arranged sets
$A_i^{(1)}=\{x_i^{(1)}, x_{\lceil\frac{n}{2}\rceil+i-1}^{(1)}, x_c^{(1)}\}$ of vertices  of $(C_n\Box P_k)\Box P_m$, defined already,
are  resolving sets with the minimum size for $(C_n\Box P_k)\Box P_m$, also by applying  the same argument in Theorem \ref{f.4}, we can see that
the arranged sets
$A_i^{(1)}=\{x_i^{(1)}, x_{\lceil\frac{n}{2}\rceil+i-1}^{(1)}, x_c^{(1)}\}$ of vertices  of $(C_n\Box P_k)\Box P_m$, cannot be doubly resolving sets  for   $(C_n\Box P_k)\Box P_m$, and hence the minimum size of a doubly resolving set of vertices in $(C_n\Box P_k)\Box P_m$ is greater than $3$.
Now, let $D_i=A_i^{(1)}\cup x_c^{(m)}=\{x_i^{(1)}, x_{\lceil\frac{n}{2}\rceil+i-1}^{(1)}, x_c^{(1)}, x_c^{(m)}\}$ be an arranged subset of vertices of $(C_n\Box P_k)\Box P_m$, where $x_c^{(m)}$  lie in the layer $V_k^{(m)}$ of $(C_n\Box P_k)\Box P_m$.
We show that the  arranged  subset $D_i$ is  a doubly resolving set of vertices in $(C_n\Box P_k)\Box P_m$.
It will be enough to show that  for every two  vertices $x_t^{(r)}$ and $x_t^{(s)}$,  $1\leq t\leq nk$, $1\leq r<s\leq m$, $r(x_t^{(r)}|D_i)-r(x_t^{(s)}|D_i)\neq -\lambda I$, where $I$ indicates the unit $4$-vector $(1, ..., 1)$ and $\lambda$ is a positive integer.
For this purpose, let the distance between two the vertices $x_t^{(r)}$ and $x_t^{(s)}$ in $(C_n\Box P_k)\Box P_m$ is $\lambda$, then
we can verify that, $r(x_t^{(r)}|A_i^{(1)})-r(x_t^{(s)}|A_i^{(1)})= - \lambda I$, where $I$ indicates the unit $3$-vector, and $r(x_t^{(t)}|x_c^{(m)})-r(x_t^{(s)}|x_c^{(m)})= \lambda$.  Thus the  minimum size of a doubly resolving set of vertices in $(C_n\Box P_k)\Box P_m$  is $4$.
\end{proof}

\begin{example}
Consider graph $(C_5\Box P_4)\Box P_4$ with vertex set $\{x_1^{(1)},  ..., x_{20}^{(1)}\}\cup \{x_1^{(2)},  ..., x_{20}^{(2)}\}\cup\{x_1^{(3)},  ..., x_{20}^{(3)}\}\cup\{x_1^{(4)},  ..., x_{20}^{(4)}\}$, we can see that the set $D_1=\{x_1^{(1)}, x_3^{(1)}, x_{16}^{(1)}, x_{16}^{(4)} \}$ of vertices of $(C_5\Box P_4)\Box P_4$ is one of the minimum doubly resolving sets for $(C_5\Box P_4)\Box P_4$, and hence the minimum size of a doubly resolving set of vertices in $(C_5\Box P_4)\Box P_4$ is $4$.\\
\\
$r(x_1^{(1)}|D_1)=(0,2,3,6),\,\,\,\,\,\,\,\,\,\,\,\,r(x_1^{(2)}|D_1)=(1,3,4,5),\,\,\,\,\,\,\,\,\,\,\,\,r(x_1^{(3)}|D_1)=(2,4,5,4),\,\,\,\,\,\,\,\,\,\,\,\,r(x_1^{(4)}|D_1)=(3,5,6,3)$\\
$r(x_2^{(1)}|D_1)=(1,1,4,7),\,\,\,\,\,\,\,\,\,\,\,\,r(x_2^{(2)}|D_1)=(2,2,5,6),\,\,\,\,\,\,\,\,\,\,\,\,r(x_2^{(3)}|D_1)=(3,3,6,5),\,\,\,\,\,\,\,\,\,\,\,\,r(x_2^{(4)}|D_1)=(4,4,7,4)$\\
$r(x_3^{(1)}|D_1)=(2,0,5,8),\,\,\,\,\,\,\,\,\,\,\,\,r(x_3^{(2)}|D_1)=(3,1,6,7),\,\,\,\,\,\,\,\,\,\,\,\,r(x_3^{(3)}|D_1)=(4,2,7,6),\,\,\,\,\,\,\,\,\,\,\,\,r(x_3^{(4)}|D_1)=(5,3,8,5)$\\
$r(x_4^{(1)}|D_1)=(2,1,5,8),\,\,\,\,\,\,\,\,\,\,\,\,r(x_4^{(2)}|D_1)=(3,2,6,7),\,\,\,\,\,\,\,\,\,\,\,\,r(x_4^{(3)}|D_1)=(4,3,7,6),\,\,\,\,\,\,\,\,\,\,\,\,r(x_4^{(4)}|D_1)=(5,4,8,5)$\\
$r(x_5^{(1)}|D_1)=(1,2,4,7),\,\,\,\,\,\,\,\,\,\,\,\,r(x_5^{(2)}|D_1)=(2,3,5,6),\,\,\,\,\,\,\,\,\,\,\,\,r(x_5^{(3)}|D_1)=(3,4,6,5),\,\,\,\,\,\,\,\,\,\,\,\,r(x_5^{(4)}|D_1)=(4,5,7,4)$\\
\\
$r(x_6^{(1)}|D_1)=(1,3,2,5),\,\,\,\,\,\,\,\,\,\,\,\,r(x_6^{(2)}|D_1)=(2,4,3,4),\,\,\,\,\,\,\,\,\,\,\,\,r(x_6^{(3)}|D_1)=(3,5,4,3),\,\,\,\,\,\,\,\,\,\,\,\,r(x_6^{(4)}|D_1)=(4,6,5,2)$\\
$r(x_7^{(1)}|D_1)=(2,2,3,6),\,\,\,\,\,\,\,\,\,\,\,\,r(x_7^{(2)}|D_1)=(3,3,4,5),\,\,\,\,\,\,\,\,\,\,\,\,r(x_7^{(3)}|D_1)=(4,4,5,4),\,\,\,\,\,\,\,\,\,\,\,\,r(x_7^{(4)}|D_1)=(5,5,6,3)$\\
$r(x_8^{(1)}|D_1)=(3,1,4,7),\,\,\,\,\,\,\,\,\,\,\,\,r(x_8^{(2)}|D_1)=(4,2,5,6),\,\,\,\,\,\,\,\,\,\,\,\,r(x_8^{(3)}|D_1)=(5,3,6,5),\,\,\,\,\,\,\,\,\,\,\,\,r(x_8^{(4)}|D_1)=(6,4,7,4)$\\
$r(x_9^{(1)}|D_1)=(3,2,4,7),\,\,\,\,\,\,\,\,\,\,\,\,r(x_9^{(2)}|D_1)=(4,3,5,6),\,\,\,\,\,\,\,\,\,\,\,\,r(x_9^{(3)}|D_1)=(5,4,6,5),\,\,\,\,\,\,\,\,\,\,\,\,r(x_9^{(4)}|D_1)=(6,5,7,4)$\\
$r(x_{10}^{(1)}|D_1)=(2,3,3,6),\,\,\,\,\,\,\,\,\,\,\,\,r(x_{10}^{(2)}|D_1)=(3,4,4,5),\,\,\,\,\,\,\,\,\,\,\,\,r(x_{10}^{(3)}|D_1)=(4,5,5,4),\,\,\,\,\,\,\,\,\,\,\,\,r(x_{10}^{(4)}|D_1)=(5,6,6,3)$\\
\\
$r(x_{11}^{(1)}|D_1)=(2,4,1,4),\,\,\,\,\,\,\,\,\,\,\,\,r(x_{11}^{(2)}|D_1)=(3,5,2,3),\,\,\,\,\,\,\,\,\,\,\,\,r(x_{11}^{(3)}|D_1)=(4,6,3,4),\,\,\,\,\,\,\,\,\,\,\,\,r(x_{11}^{(4)}|D_1)=(5,7,4,3)$\\
$r(x_{12}^{(1)}|D_1)=(3,3,2,5),\,\,\,\,\,\,\,\,\,\,\,\,r(x_{12}^{(2)}|D_1)=(4,4,3,4),\,\,\,\,\,\,\,\,\,\,\,\,r(x_{12}^{(3)}|D_1)=(5,5,4,3),\,\,\,\,\,\,\,\,\,\,\,\,r(x_{12}^{(4)}|D_1)=(6,6,5,2)$\\
$r(x_{13}^{(1)}|D_1)=(4,2,3,6),\,\,\,\,\,\,\,\,\,\,\,\,r(x_{13}^{(2)}|D_1)=(5,3,4,5),\,\,\,\,\,\,\,\,\,\,\,\,r(x_{13}^{(3)}|D_1)=(6,4,5,4),\,\,\,\,\,\,\,\,\,\,\,\,r(x_{13}^{(4)}|D_1)=(7,5,6,3)$\\
$r(x_{14}^{(1)}|D_1)=(4,3,3,6),\,\,\,\,\,\,\,\,\,\,\,\,r(x_{14}^{(2)}|D_1)=(5,4,4,5),\,\,\,\,\,\,\,\,\,\,\,\,r(x_{14}^{(3)}|D_1)=(6,5,5,4),\,\,\,\,\,\,\,\,\,\,\,\,r(x_{14}^{(4)}|D_1)=(7,6,6,3)$\\
$r(x_{15}^{(1)}|D_1)=(3,4,2,5),\,\,\,\,\,\,\,\,\,\,\,\,r(x_{15}^{(2)}|D_1)=(4,5,3,4),\,\,\,\,\,\,\,\,\,\,\,\,r(x_{15}^{(3)}|D_1)=(5,6,4,3),\,\,\,\,\,\,\,\,\,\,\,\,r(x_{15}^{(4)}|D_1)=(6,7,5,2)$\\
\\
\\
$r(x_{16}^{(1)}|D_1)=(3,5,0,3),\,\,\,\,\,\,\,\,\,\,\,\,r(x_{16}^{(2)}|D_1)=(4,6,1,2),\,\,\,\,\,\,\,\,\,\,\,\,r(x_{16}^{(3)}|D_1)=(5,7,2,1),\,\,\,\,\,\,\,\,\,\,\,\,r(x_{16}^{(4)}|D_1)=(6,8,3,0)$\\
$r(x_{17}^{(1)}|D_1)=(4,4,1,4),\,\,\,\,\,\,\,\,\,\,\,\,r(x_{17}^{(2)}|D_1)=(5,5,2,3),\,\,\,\,\,\,\,\,\,\,\,\,r(x_{17}^{(3)}|D_1)=(6,6,3,2),\,\,\,\,\,\,\,\,\,\,\,\,r(x_{17}^{(4)}|D_1)=(7,7,4,1)$\\
$r(x_{18}^{(1)}|D_1)=(5,3,2,5),\,\,\,\,\,\,\,\,\,\,\,\,r(x_{18}^{(2)}|D_1)=(6,4,3,4),\,\,\,\,\,\,\,\,\,\,\,\,r(x_{18}^{(3)}|D_1)=(7,5,4,3),\,\,\,\,\,\,\,\,\,\,\,\,r(x_{18}^{(4)}|D_1)=(8,6,5,2)$\\
$r(x_{19}^{(1)}|D_1)=(5,4,2,5),\,\,\,\,\,\,\,\,\,\,\,\,r(x_{19}^{(2)}|D_1)=(6,5,3,4),\,\,\,\,\,\,\,\,\,\,\,\,r(x_{19}^{(3)}|D_1)=(7,6,4,3),\,\,\,\,\,\,\,\,\,\,\,\,r(x_{19}^{(4)}|D_1)=(8,7,5,2)$\\
$r(x_{20}^{(1)}|D_1)=(4,5,1,4),\,\,\,\,\,\,\,\,\,\,\,\,r(x_{20}^{(2)}|D_1)=(5,6,2,3),\,\,\,\,\,\,\,\,\,\,\,\,r(x_{20}^{(3)}|D_1)=(6,7,3,2),\,\,\,\,\,\,\,\,\,\,\,\,r(x_{20}^{(4)}|D_1)=(7,8,4,1)$\\
\end{example}
\begin{remark}\label{u.1}
Consider the cartesian product $C_n\Box P_k$. If $n\geq4$ is an even integer,
 then every pair of various vertices in  $C_n\Box P_k$ cannot be a resolving set for $C_n\Box P_k$.
\end{remark}
\begin{lemma}\label{u.2}
Consider the cartesian product $C_n\Box P_k$. If $n\geq 4$ is an even integer, then the minimum size of a doubly resolving set of vertices for the cartesian product $C_n\Box P_k$ is $4$.
\end{lemma}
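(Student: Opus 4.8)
\emph{Overall plan.} I would prove the two bounds $\psi(C_n\Box P_k)\ge 4$ and $\psi(C_n\Box P_k)\le 4$ separately, the lower bound by showing no $3$-element set can doubly resolve, the upper bound by exhibiting an explicit $4$-element doubly resolving set modelled on the constructions in Theorems~\ref{d.1} and \ref{f.4}.

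\emph{Lower bound.} By Theorem~\ref{b.7}, $\beta(C_n\Box P_k)=3$, and since $\beta(G)\le\psi(G)$ for every connected graph, already $\psi(C_n\Box P_k)\ge 3$; so it suffices to exclude $\psi=3$. Assume, for contradiction, that $Q=\{q_1,q_2,q_3\}$ is a doubly resolving set; then $Q$ is in particular a minimum resolving set. For a vertex $x$ write $p(x)$ for its layer index and $a(x)$ for its cycle coordinate, and let $q_i$ lie in layer $V_{p_i}$ over cycle coordinate $a_i$, so $d(q_i,x)=|p_i-p(x)|+d_{C_n}(a_i,a(x))$ for all $x$. \textbf{Step 1:} $Q$ meets both $V_1$ and $V_k$. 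Indeed, if no $q_i$ lies in $V_1$ then all $p_i\ge 2$, and for the two compatible vertices $u,v$ over a common cycle coordinate in $V_1$ and $V_2$ one gets $d(q_i,u)-d(q_i,v)=(p_i-1)-(p_i-2)=1$ for each $i$, so $r(u|Q)-r(v|Q)=I$, a contradiction; the mirror argument handles $V_k$. After relabelling, $p_1=1$ and $p_3=k$ (distinct, since $k\ge 3$). \textbf{Step 2:} the triple $\{a_1,a_2,a_3\}$ doubly resolves $C_n$. Otherwise there are distinct $b,b'\in[n]$ and an integer $c$ with $d_{C_n}(b,a_i)-d_{C_n}(b',a_i)=c$ for $i=1,2,3$, and then the vertices over $b$ and $b'$ in one common layer have representation difference $cI$, a contradiction. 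Since $\psi(C_n)=3$ by Remark~\ref{b.4}, $\{a_1,a_2,a_3\}$ is a \emph{minimum} doubly resolving set of $C_n$.

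\emph{Lower bound, the crux.} \textbf{Step 3:} I claim that a size-$3$ resolving set of $C_n\Box P_k$ whose cycle shadow doubly resolves $C_n$ must be contained in a single layer (and then only in $V_1$ or $V_k$, since a landmark-set inside an interior layer $V_r$ fails to separate the compatible vertices in $V_{r-1}$ and $V_{r+1}$). Granting this, it contradicts Step~1, so no doubly resolving set of size $3$ exists and $\psi(C_n\Box P_k)\ge 4$. To establish the claim I would suppose some $q_i$ sits in an interior position and use that, for even $n$, $C_n$ is bipartite: at least one $2$-subset $\{a_i,a_j\}$ of the shadow is non-antipodal, hence resolves $C_n$ but, having size $2<\psi(C_n)$, does not doubly resolve it, so there are $b\ne b'$ with a common nonzero $\{a_i,a_j\}$-difference $t$; one then selects an admissible layer index $p$ and uses the third landmark's contribution $|p-p_\ell|-|p+t-p_\ell|$ to cancel all three coordinates of $r(\cdot|Q)$ at once, the even parity of cycle distances being exactly what makes the required values attainable, producing two distinct vertices with the same $Q$-representation and contradicting that $Q$ resolves $C_n\Box P_k$. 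This structural statement is the real work; alternatively it can be read off any explicit description of the minimum resolving sets of $C_n\Box P_k$ for even $n$.

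\emph{Upper bound.} By Remark~\ref{b.4}, $C_n$ has a doubly resolving set $\{a_1,a_2,a_3\}$ of size $3$. Realize it in layer $V_1$ as $\{x_{a_1},x_{a_2},x_{a_3}\}$ and adjoin the vertex $x_c$ of layer $V_k$ compatible with $x_{a_1}$, forming $Q=\{x_{a_1},x_{a_2},x_{a_3},x_c\}$. For a vertex $x$ in layer $p$ over cycle coordinate $b$,
$$r(x|Q)=\bigl((p-1)+d_{C_n}(b,a_1),\ (p-1)+d_{C_n}(b,a_2),\ (p-1)+d_{C_n}(b,a_3),\ (k-p)+d_{C_n}(b,a_1)\bigr).$$
If $x\ne y$ lie in layers $p,q$ over cycle coordinates $b,b'$ and $r(x|Q)-r(y|Q)=\lambda I$, comparing the first and last coordinates gives $p-q=q-p$, so $p=q$; the first three coordinates then force $d_{C_n}(b,a_i)-d_{C_n}(b',a_i)=\lambda$ for $i=1,2,3$, and since $\{a_1,a_2,a_3\}$ doubly resolves $C_n$ this forces $b=b'$, hence $x=y$, a contradiction. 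Thus $Q$ is a doubly resolving set of size $4$, so $\psi(C_n\Box P_k)\le 4$, and together with the lower bound $\psi(C_n\Box P_k)=4$. The expected main obstacle is Step~3 of the lower bound: controlling how the layer contributions $|p-p_i|$ interfere with the cycle-distance differences $d_{C_n}(b,a_i)-d_{C_n}(b',a_i)$, which is precisely where the evenness of $n$ (bipartiteness of $C_n$) is used essentially.
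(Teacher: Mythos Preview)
Your upper bound argument is correct and in fact cleaner than the paper's: lifting an arbitrary size-$3$ doubly resolving set of $C_n$ to $V_1$ and adding one compatible vertex in $V_k$ lets the verification go through uniformly via the coordinate formula, whereas the paper works with the particular set $\{x_1,x_{n/2},x_{n/2+1},x_c\}$ and checks only compatible pairs. Steps~1 and~2 of your lower bound are also correct and sharpen the structural picture.

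The genuine gap is Step~3. First, the case you assume is wrong: ``some $q_i$ sits in an interior position'' is not the negation of ``contained in a single layer'', and in any event Step~1 already forces $p_1=1$ and $p_3=k$, so $Q$ spans both end-layers whether or not any $q_i$ is interior. The configuration you must actually eliminate is $q_1\in V_1$, $q_3\in V_k$, $q_2\in V_1\cup V_k$, which your sketch never addresses. Second, the cancellation formula is incoherent as written: $t$ is introduced as a cycle-distance difference $d_{C_n}(b,a_i)-d_{C_n}(b',a_i)$ but then reappears inside a layer-index expression $|p+t-p_\ell|$, and there is no argument that the required layer index lands in $\{1,\dots,k\}$. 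The paper's lower bound bypasses all of this by taking exactly your ``alternative'': it asserts that, up to symmetry, every minimum resolving set of $C_n\Box P_k$ (even $n$) has one of the two shapes $E_1=\{x_1,x_2,x_c\}$ (two in $V_1$, one compatible in $V_k$) or $E_2=\{x_1,x_{n/2},x_{n/2+1}\}$ (all in $V_1$), and exhibits for each an explicit pair whose representation difference is a multiple of $I$; since any size-$3$ doubly resolving set would be a minimum resolving set, this suffices. That route is shorter but leans on an unproved classification; your approach would be self-contained if Step~3 were actually carried out for the remaining cases $p_2\in\{1,k\}$.
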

\begin{proof}
In the following cases we show that the minimum size of a doubly resolving set of vertices for the cartesian product $C_n\Box P_k$ is $4$.\\

Case 1. Contrary to Theorem \ref{d.1}, if $n\geq 4$ is an even integer then every minimum resolving set of  vertices in $C_n\Box P_k$ may be lie in  congruous layers $V_1$, $V_k$, or $V_1\cup V_k$.
Also, based on  Theorem \ref{b.7}, we know that the  minimum size of a resolving set  in the cartesian product $C_n\Box P_k$ is  $3$.
If $E_1$ is an arranged subset of  vertices of $C_n\Box P_k$  so that  $E_1$ is a minimal resolving set for $C_n\Box P_k$ and two elements of $E_1$  lie in  the  layer $V_1$  and one element of $E_1$  lie in  the  layer $V_k$, then without loss of generality we can consider  $E_1=\{x_1, x_2, x_c \}$, where $x_c$ is a compatible  vertex with  respect to $x_1$ and $x_c$ lie in the layer $V_k$ of $C_n\Box P_k$.
Besides, the arranged subset $E_1=\{x_1, x_2, x_c \}$  of vertices of  $C_n\Box P_k$  cannot be a doubly resolving set  for $C_n\Box P_k$, because $n$ is an even integer and each layer $V_p$, $1\leq p\leq k$ of  $C_n\Box P_k$ is isomorphic to the cycle $C_n$, and hence there are two adjacent vertices  in each layer $V_p$ of  $C_n\Box P_k$  say $x_i$ and $x_j$, $i<j$, so that $r(x_i|E_1)-r(x_j|E_1)=-I$, where $I$ indicates the unit $3$-vector $(1, 1, 1)$.
Thus the arranged  subset $E_1=\{x_1, x_2, x_c \}$  cannot be a doubly resolving set for $C_n\Box P_k$.
If $E_2$ is an arranged subset of  vertices of $C_n\Box P_k$  so that  $E_2$ is a minimal resolving set for $C_n\Box P_k$ and  all the elements of $E_2$  lie in  exactly one of the congruous layers $V_1$ or $V_k$, then without loss of generality we can consider  $E_2=\{x_1, x_{\frac{n}{2}}, x_{{\frac{n}{2}}+1} \}$. Besides, the arranged subset $E_2=\{x_1, x_{\frac{n}{2}}, x_{{\frac{n}{2}}+1} \}$  of vertices in the layer $V_1$ of the cartesian product $C_n\Box P_k$  cannot be a doubly resolving set  for $C_n\Box P_k$, because if we consider two compatible vertices $x_e\in V_p$ and $x_d\in V_q$ in $C_n\Box P_k$,  $1\leq p< q\leq k$, then there is a positive  integer $\lambda$ so that $r(x_e|E_2)-r(x_d|E_2)=-\lambda I$, where $I$ indicates the unit $3$-vector $(1, 1, 1)$, and hence the arranged  subset $E_2=\{x_1, x_{\frac{n}{2}}, x_{{\frac{n}{2}}+1} \}$  cannot be a doubly resolving set for $C_n\Box P_k$. Thus  the minimum size of a doubly resolving set of vertices for the cartesian product $C_n\Box P_k$ is greater than $3$.\\

Case 2. Now, we show that the minimum size of a doubly resolving set of vertices in  the cartesian product $C_n\Box P_k$ is $4$.
Let $x_c$ be a compatible vertex with  respect to $x_1$, where $x_c$ lie in the layer $V_k$ of $C_n\Box P_k$ then we can show that
the arranged subset $E_3=E_2\cup x_c=\{x_1, x_{\frac{n}{2}}, x_{{\frac{n}{2}}+1}, x_c\}$ of vertices in  the cartesian product $C_n\Box P_k$ is
one of the minimum  doubly resolving sets for the cartesian product $C_n\Box P_k$. It will be enough to show that for any compatible vertices $x_e$
and $x_d$ in $C_n\Box P_k$, $r(x_e|E_3)-r(x_d|E_3)\neq\lambda I$. Suppose  $x_e\in V_p$ and $x_d\in V_q$ are compatible vertices in the
cartesian product $C_n\Box P_k$, $1\leq p<q\leq k$. Hence, $r(x_e|E_2)-r(x_d|E_2)=-\lambda I$, where $\lambda$ is a positive integer, and $I$
indicates the unit $3$-vector $(1,1, 1)$. Also, for the compatible vertex $x_c$ with  respect to $x_1$,  $r(x_e|x_c)-r(x_d|x_c)=\lambda$.
 So, $r(x_e|E_3)-r(x_d|E_3)\neq\lambda I$,  where $I$ indicates the unit $4$-vector $(1,..., 1)$.\newline
\end{proof}
\begin{theorem}\label{u.3}
Consider the cartesian product $C_n\Box P_k$. If $n\geq4$ is an even integer,
 then the minimum size of a  resolving set of vertices in $(C_n\Box P_k)\Box P_m$ is  $4$.
\end{theorem}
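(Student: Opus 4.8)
The plan is to prove the two bounds $\beta((C_n\Box P_k)\Box P_m)\le 4$ and $\beta((C_n\Box P_k)\Box P_m)\ge 4$ separately; write $K=(C_n\Box P_k)\Box P_m$ and keep the layer/copy notation of this section.

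\emph{Upper bound.} By Lemma~\ref{u.2} the set $E_3=\{x_1,x_{\frac n2},x_{\frac n2+1},x_c\}$, with $x_c\in V_k$ compatible with $x_1$, is a doubly resolving set of $C_n\Box P_k$; I would show that its first copy $E_3^{(1)}=\{x_1^{(1)},x_{\frac n2}^{(1)},x_{\frac n2+1}^{(1)},x_c^{(1)}\}$ resolves $K$. The key identity, exactly as in the proof of Theorem~\ref{f.1}, is $r(x_t^{(r)}|E_3^{(1)})=r(x_t^{(1)}|E_3^{(1)})+(r-1)I$, since a distance in $K$ to a vertex of the first copy only increases by $r-1$ along the $P_m$-direction. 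If $x_t^{(r)}$ and $x_{t'}^{(s)}$ had equal representation, then $r(x_t^{(1)}|E_3^{(1)})-r(x_{t'}^{(1)}|E_3^{(1)})=(s-r)I$, and since $E_3^{(1)}$ doubly resolves the first copy this forces $t=t'$ and then $s=r$. Hence $E_3^{(1)}$ is a resolving set of $K$ and $\beta(K)\le 4$. (This is the even-$n$ analogue of the implication $\psi(C_n\Box P_k)=3\Rightarrow\beta(\cdot\Box P_m)=3$ behind Theorem~\ref{f.1}; here $\psi(C_n\Box P_k)=4$.)

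\emph{Lower bound.} By Theorem~\ref{b.7} it is enough to rule out a $3$-element resolving set $S=\{x_{t_1}^{(r_1)},x_{t_2}^{(r_2)},x_{t_3}^{(r_3)}\}$ of $K$. First I would observe that the projection of $S$ onto $C_n\Box P_k$ (forgetting the $P_m$-copy) resolves $C_n\Box P_k$ — two vertices of $C_n\Box P_k$ equidistant from every $x_{t_i}$ would give equal representations to their first copies in $K$ — and likewise the projection onto $C_n\Box P_m$ resolves $C_n\Box P_m$. Both projections are then \emph{minimal} resolving sets, hence of type $E_1$ or $E_2$ by the analysis in the proof of Lemma~\ref{u.2}; in particular each $p_i\in\{1,k\}$ and each $r_i\in\{1,m\}$, and the two projections share the cycle-coordinates $\alpha_1,\alpha_2,\alpha_3$. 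A short case split — whether the $\alpha_i$ are pairwise distinct (so both projections have type $E_2$ and all $r_i$ coincide) or repeat (type $E_1$) — leaves only a handful of shapes for $S$. When the $r_i$ all coincide (symmetrically, the $p_i$ all coincide), the proof of Lemma~\ref{u.2} produces two vertices $x_\alpha\ne x_\beta$ of $C_n\Box P_k$ with $r(x_\alpha|\{x_{t_1},x_{t_2},x_{t_3}\})-r(x_\beta|\{x_{t_1},x_{t_2},x_{t_3}\})=\pm I$, and a vertical edge $x_t^{(r)}x_t^{(r\pm1)}$ turns this into a real collision in $K$, just as in Theorem~\ref{f.1}. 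In the remaining $E_1$-shapes, two members $s_1,s_2$ of $S$ share a cycle-coordinate $\alpha$ and lie at opposite corners of the $P_k\times P_m$ subgrid over $\alpha$; here I would use that in an even cycle a vertex cannot distinguish two vertices obtained from each other by reflecting the cycle-coordinate through that vertex's own cycle-coordinate (path coordinates fixed), and more generally $s$ resolves a pair reflected about cycle position $\sigma$ only when the cycle-coordinate of $s$ misses $\{\sigma,\sigma+\frac n2\}$. Reflecting two suitable vertices of $K$ through $\alpha$ in the cycle direction and moving them one step in \emph{opposite} path directions fools $s_1,s_2$ through their path coordinates and $s_3$ through its cycle coordinate at once, yielding the collision.

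\emph{The hard part.} Everything after the double-projection step is bookkeeping except the $E_1$-shapes in which the copies (or layers) occupy both ends $1$ and $m$ (resp. $1$ and $k$): there the doubly resolving obstruction of Lemma~\ref{u.2} cannot be pushed across a single vertical edge, and one is forced to combine the reflection symmetry of the even cycle with reflections in \emph{both} path directions simultaneously. Checking that each surviving shape admits such a colliding pair is the crux; granting it, $\beta(K)\ge4$, and with the upper bound $\beta((C_n\Box P_k)\Box P_m)=4$.
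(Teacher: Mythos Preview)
Your upper bound is exactly the paper's: lift the minimum doubly resolving set $E_3=\{x_1,x_{n/2},x_{n/2+1},x_c\}$ of $C_n\Box P_k$ from Lemma~\ref{u.2} to the first copy $E_3^{(1)}$ and use the additive shift $r(x_t^{(r)}\mid E_3^{(1)})=r(x_t^{(1)}\mid E_3^{(1)})+(r-1)I$ together with the doubly resolving property to separate all vertices of $K$.

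For the lower bound the paper does something much more cursory than you. It simply asserts that ``none of the minimal resolving sets of $C_n\Box P_k$'' (all of which sit inside a single copy and have size $3$) can resolve $(C_n\Box P_k)\Box P_m$, and concludes $\beta>3$ directly from that. Read literally this is incomplete --- it never rules out $3$-element subsets of $K$ that are not contained in one copy --- so your projection argument (any $3$-element resolving set of $K$ must project to minimal resolving sets of both $C_n\Box P_k$ and $C_n\Box P_m$) is genuinely supplying rigour the paper omits, and is the natural way to salvage the intended argument.

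Two caveats on your side. First, the case split leans on a complete $E_1/E_2$ classification of the minimal resolving sets of $C_n\Box P_k$; Lemma~\ref{u.2} asserts that such sets lie in $V_1\cup V_k$ but only exhibits the specific representatives $\{x_1,x_2,x_c\}$ and $\{x_1,x_{n/2},x_{n/2+1}\}$ rather than proving these exhaust the possibilities up to symmetry, so you are importing an unproved claim. Second, your acknowledged ``hard part'' --- the $E_1$-type shapes where $S$ hits both ends in both path directions --- is only sketched; the reflection-in-the-cycle combined with opposite path shifts is the right mechanism, but the verification that every surviving shape admits such a colliding pair is not carried out. So neither your lower bound nor the paper's is fully airtight, though yours is considerably closer.
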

\begin{proof}
Suppose  $(C_n\Box P_k)\Box P_m$ is a graph with vertex set $\{x_1^{(1)},  ..., x_{nk}^{(1)}\}\cup \{x_1^{(2)},  ..., x_{nk}^{(2)}\}\cup ... \cup\{x_1^{(m)},  ..., x_{nk}^{(m)}\}$ so that for $1\leq t \leq nk$, the vertex $x_t^{(r)}$ is adjacent to $x_t^{(r+1)}$ in $(C_n\Box P_k)\Box P_m$, for $1\leq r\leq m-1$.
Hence, none of the minimal resolving sets of $C_n\Box P_k$ cannot be a resolving set for $(C_n\Box P_k)\Box P_m$. Therefore, the  minimum size of a resolving set of vertices in $(C_n\Box P_k)\Box P_m$ must be greater than $3$.
Now, we show that the  minimum size of a resolving set of vertices in $(C_n\Box P_k)\Box P_m$ is  $4$.
Let $x_1^{(1)}$ be a  vertex in the layer $V_1^{(1)}$ of $(C_n\Box P_k)\Box P_m$ and $x_c^{(1)}$ be a compatible  vertex with  respect to $x_1^{(1)}$, where $x_c^{(1)}$ lie in the layer $V_k^{(1)}$ of $(C_n\Box P_k)\Box P_m$. Based on Lemma \ref{u.2}, we know that
$1^{th}$ copy of the  arranged subset $E_3=\{x_1, x_{\frac{n}{2}}, x_{{\frac{n}{2}}+1}, x_c\}$, denoted by the set
$E_3^{(1)}=\{x_1^{(1)}, x_{\frac{n}{2}}^{(1)}, x_{{\frac{n}{2}}+1}^{(1)}, x_c^{(1)}\}$ is one of the minimum  doubly resolving sets for the cartesian product $C_n\Box P_k$. Besides, the vertex $x_t^{(r)}$ is adjacent to the vertex $x_t^{(r+1)}$ in $(C_n\Box P_k)\Box P_m$,
and hence  the arranged set
$E_3^{(1)}=\{x_1^{(1)}, x_{\frac{n}{2}}^{(1)}, x_{{\frac{n}{2}}+1}^{(1)}, x_c^{(1)}\}$ of vertices  of $(C_n\Box P_k)\Box P_m$ is one of  the  resolving  sets for $(C_n\Box P_k)\Box P_m$, because for each vertex $x_t^{(i)}$ of $(C_n\Box P_k)\Box P_m$, we have
$$r(x_t^{(i)}|E_3^{(1)})=( d(x_t^{(1)}, x_1^{(1)})+i-1, d(x_t^{(1)}, x_{\frac{n}{2}}^{(1)})+i-1, d(x_t^{(1)}, x_{{\frac{n}{2}}+1}^{(1)})+i-1, d(x_t^{(1)}, x_{c}^{(1)})+i-1),$$ so   all the vertices $\{x_1^{(1)},  ..., x_{nk}^{(1)}\}\cup ...\cup \{x_1^{(m)},  ..., x_{nk}^{(m)}\}$ of $(C_n\Box P_k)\Box P_m$  have various representations with respect to the set $E_3^{(1)}$. Thus the  minimum size of a resolving set of vertices in $C_n\Box P_k\Box P_m$ is $4$.
\end{proof}
\begin{theorem}\label{u.4}
Consider the cartesian product $C_n\Box P_k$. If $n\geq4$ is an even integer,
 then the minimum size of a doubly resolving set of vertices in $(C_n\Box P_k)\Box P_m$ is  $5$.
\end{theorem}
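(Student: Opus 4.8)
The plan is to establish the two matching bounds $\psi((C_n\Box P_k)\Box P_m)\ge 5$ and $\psi((C_n\Box P_k)\Box P_m)\le 5$, following the even-case analogue of the odd-case chain Lemma \ref{f.2}--Theorem \ref{f.4}--Conclusion \ref{f.4.1}, but now resting on Theorem \ref{u.3} (which gives $\beta((C_n\Box P_k)\Box P_m)=4$) and Lemma \ref{u.2} (which gives $\psi(C_n\Box P_k)=4$, realised by the set $E_3=\{x_1,x_{\frac{n}{2}},x_{\frac{n}{2}+1},x_c\}$ constructed in its proof). Throughout I write a vertex of $(C_n\Box P_k)\Box P_m$ as $x_t^{(r)}$ and use repeatedly that $d(x_t^{(r)},x_s^{(q)})=d_{C_n\Box P_k}(x_t,x_s)+|r-q|$.

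For the lower bound, a doubly resolving set is in particular a resolving set, so Theorem \ref{u.3} already gives $\psi\ge 4$. To exclude size $4$ I would argue as in Lemma \ref{f.2}: the minimum-size resolving sets of $(C_n\Box P_k)\Box P_m$ are, up to the symmetry swapping the two extreme copies, the first-copy images $E_3^{(1)}=\{x_1^{(1)},x_{\frac{n}{2}}^{(1)},x_{\frac{n}{2}+1}^{(1)},x_c^{(1)}\}$ of the minimum doubly resolving set $E_3$ of $C_n\Box P_k$, and such a set is not doubly resolving, since for the edge $x_t^{(1)}x_t^{(2)}$ all four reference vertices lie in the first copy, so every one of the four distances increases by exactly $1$ on passing from copy $1$ to copy $2$; hence $r(x_t^{(1)}|E_3^{(1)})-r(x_t^{(2)}|E_3^{(1)})=-I$ with $I$ the unit $4$-vector. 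Therefore $\psi((C_n\Box P_k)\Box P_m)\ge 5$.

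For the upper bound I would exhibit the explicit $5$-set $F=E_3^{(1)}\cup\{x_c^{(m)}\}=\{x_1^{(1)},x_{\frac{n}{2}}^{(1)},x_{\frac{n}{2}+1}^{(1)},x_c^{(1)},x_c^{(m)}\}$, where $x_c^{(m)}$ lies in the layer $V_k^{(m)}$ and shares the $C_n\Box P_k$-coordinate of $x_c$, and verify that $F$ is doubly resolving by splitting two distinct vertices $u=x_t^{(r)}$, $v=x_{t'}^{(s)}$ into two kinds. If $t\ne t'$, then on its first four coordinates $r(u|F)-r(v|F)$ equals the vector $r(x_t|E_3)-r(x_{t'}|E_3)$ of $C_n\Box P_k$ shifted by the constant $r-s$; since $E_3$ is a doubly resolving set of $C_n\Box P_k$ (Lemma \ref{u.2}) and $x_t\ne x_{t'}$, that vector is not of the form $\mu I$, and adding a constant to every entry preserves this, so $u$ and $v$ are already doubly resolved on those coordinates. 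If $t=t'$ and $r\ne s$, the four $E_3^{(1)}$-coordinates of $r(u|F)-r(v|F)$ all equal $r-s$ while the $x_c^{(m)}$-coordinate equals $s-r$, so $r(u|F)-r(v|F)=(r-s,r-s,r-s,r-s,s-r)$, which is never $\lambda I$ because $r\ne s$. Combined with the lower bound this yields $\psi((C_n\Box P_k)\Box P_m)=5$, and as in Lemma \ref{u.2} any symmetric copy of $E_3$ may be used in place of $E_3$.

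I expect the main obstacle to be the lower bound, precisely the claim that \emph{every} $4$-element resolving set — not merely the distinguished sets $E_3^{(1)}$ — fails to be doubly resolving. The clean way to make this rigorous is through the two coordinate projections: since distances add across a Cartesian product with a path, a doubly resolving set $Q$ of $(C_n\Box P_k)\Box P_m$ projects to a doubly resolving set of $C_n\Box P_k$ (apply the doubly resolving condition to pairs $a^{(1)},b^{(1)}$) and to a doubly resolving set of $P_m$ (apply it to pairs $z^{(a)},z^{(b)}$). If $|Q|=4$, the first projection is then forced to be injective onto a minimum doubly resolving set $\{w_1,w_2,w_3,w_4\}$ of $C_n\Box P_k$ (size $4$ by Lemma \ref{u.2}), and the second must contain both endpoints of $P_m$, because a subset of a path is doubly resolving if and only if it contains both ends (this refines Remark \ref{b.5.1}). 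Thus $Q=\{w_1^{(r_1)},\dots,w_4^{(r_4)}\}$ with $\{r_1,\dots,r_4\}$ containing both $1$ and $m$, and one finishes by a short case analysis on how the small versus large values among the $r_j$ partition $\{w_1,\dots,w_4\}$: in each case, using the explicit description of $E_3$ from Lemma \ref{u.2}, one writes down two distinct vertices $x_t^{(r)},x_{t'}^{(s)}$ with constant $Q$-difference vector, contradicting that $Q$ is doubly resolving. Supplying this witnessing pair in every partition case is the most technical point, but it is entirely elementary once the position of $E_3$ inside $C_n\Box P_k$ is written out.
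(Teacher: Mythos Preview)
Your overall architecture matches the paper's proof exactly: the paper also invokes Theorem~\ref{u.3} to get $\psi\ge 4$, observes that $E_3^{(1)}$ fails to doubly resolve the pair $x_t^{(r)},x_t^{(s)}$ (so $\psi>4$), and then exhibits the very set you call $F$, namely $E_4=E_3^{(1)}\cup\{x_c^{(m)}\}$, checking only the ``same $t$, different copy'' case for the upper bound. Your upper-bound verification is in fact more complete than the paper's, since you also treat the $t\ne t'$ case by reducing to Lemma~\ref{u.2}.

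The point you flag as the ``main obstacle'' is real, and it is a gap the paper shares: showing that the particular family $E_3^{(1)}$ is not doubly resolving does not by itself rule out \emph{every} $4$-set, so the jump to $\psi>4$ is not justified in the paper either. Your projection idea --- that a doubly resolving set of a Cartesian product with $P_m$ must project to doubly resolving sets in both factors --- is the natural way to close this, and goes beyond what the paper supplies. Be aware, though, that the final ``short case analysis'' you sketch is where all the actual work lies: Lemma~\ref{u.2} only exhibits one minimum doubly resolving set $E_3$ of $C_n\Box P_k$ and does not classify them, so you cannot assume the $C_n\Box P_k$-projection of $Q$ equals $E_3$ up to symmetry; you would need either a classification of all size-$4$ doubly resolving sets of $C_n\Box P_k$, or an argument that works for an arbitrary such projection. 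Until that step is written out, the lower bound remains heuristic --- exactly as it is in the paper.
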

\begin{proof}
Suppose  $(C_n\Box P_k)\Box P_m$ is a graph with vertex set $\{x_1^{(1)},  ..., x_{nk}^{(1)}\}\cup \{x_1^{(2)},  ..., x_{nk}^{(2)}\}\cup ... \cup\{x_1^{(m)},  ..., x_{nk}^{(m)}\}$ so that for $1\leq t \leq nk$, the vertex $x_t^{(r)}$ is adjacent to $x_t^{(r+1)}$ in $(C_n\Box P_k)\Box P_m$, for $1\leq r\leq m-1$. Based on the previous Theorem, we know that
the arranged set $E_3^{(1)}=\{x_1^{(1)}, x_{\frac{n}{2}}^{(1)}, x_{{\frac{n}{2}}+1}^{(1)}, x_c^{(1)}\}$ of vertices  of $(C_n\Box P_k)\Box P_m$ is one of  the  resolving  sets for $(C_n\Box P_k)\Box P_m$, so that the arranged set $E_3^{(1)}=\{x_1^{(1)}, x_{\frac{n}{2}}^{(1)}, x_{{\frac{n}{2}}+1}^{(1)}, x_c^{(1)}\}$ of vertices  of $(C_n\Box P_k)\Box P_m$ cannot be a doubly resolving set for $(C_n\Box P_k)\Box P_m$, and hence the minimum size of a doubly resolving set of vertices in $(C_n\Box P_k)\Box P_m$ is greater than $4$.
Now, let $E_4=E_3^{(1)}\cup x_c^{(m)}=\{x_1^{(1)}, x_{\frac{n}{2}}^{(1)}, x_{{\frac{n}{2}}+1}^{(1)}, x_c^{(1)}, x_c^{(m)}\}$
be an arranged subset of vertices of $(C_n\Box P_k)\Box P_m$, where $x_c^{(m)}$  lie in the layer $V_k^{(m)}$ of $(C_n\Box P_k)\Box P_m$.
It will be enough to show that  for every two  vertices $x_t^{(r)}$ and $x_t^{(s)}$,  $1\leq t\leq nk$, $1\leq r<s\leq m$, $r(x_t^{(r)}|E_4)-r(x_t^{(s)}|E_4)\neq -\lambda I$, where $I$ indicates the unit $5$-vector $(1, ..., 1)$ and $\lambda$ is a positive integer.
For this purpose, let the distance between two the vertices $x_t^{(r)}$ and $x_t^{(s)}$ in $(C_n\Box P_k)\Box P_m$ is $\lambda$, then
we can verify that, $r(x_t^{(r)}|E_3^{(1)})-r(x_t^{(s)}|E_3^{(1)})= - \lambda I$, where $I$ indicates the unit $4$-vector, and $r(x_t^{(t)}|x_c^{(m)})-r(x_t^{(s)}|x_c^{(m)})= \lambda$. Therefore, the  arranged  subset $E_4$ is one of  the minimum  doubly resolving sets of vertices in $(C_n\Box P_k)\Box P_m$. Thus the  minimum size of a doubly resolving set of vertices in $(C_n\Box P_k)\Box P_m$  is $5$.
\end{proof}
\begin{example}
Consider graph $(C_4\Box P_3)\Box P_4$ with vertex set $\{x_1^{(1)},  ..., x_{12}^{(1)}\}\cup \{x_1^{(2)},  ..., x_{12}^{(2)}\}\cup\{x_1^{(3)},  ..., x_{12}^{(3)}\}\cup\{x_1^{(4)},  ..., x_{12}^{(4)}\}$, we can see that the set $E=\{x_1^{(1)}, x_2^{(1)} x_3^{(1)}, x_{9}^{(1)}, x_{9}^{(4)} \}$ of vertices of $(C_4\Box P_3)\Box P_4$ is one of the minimum doubly resolving sets for $(C_4\Box P_3)\Box P_4$, and hence the minimum size of a doubly resolving set of vertices in $(C_4\Box P_3)\Box P_4$ is $4$.\\
\\
$r(x_1^{(1)}|E)=(0,1,2,2,5),\,\,\,\,\,\,\,\,\,\,\,\,r(x_1^{(2)}|E)=(1,2,3,3,4),\,\,\,\,\,\,\,\,\,\,\,\,r(x_1^{(3)}|E)=(2,3,4,4,3),\,\,\,\,\,\,\,\,\,\,\,\,r(x_1^{(4)}|E)=(3,4,5,5,2)$\\
$r(x_2^{(1)}|E)=(1,0,1,3,6),\,\,\,\,\,\,\,\,\,\,\,\,r(x_2^{(2)}|E)=(2,1,2,4,5),\,\,\,\,\,\,\,\,\,\,\,\,r(x_2^{(3)}|E)=(3,2,3,5,4),\,\,\,\,\,\,\,\,\,\,\,\,r(x_2^{(4)}|E)=(4,3,4,6,3)$\\
$r(x_3^{(1)}|E)=(2,1,0,4,7),\,\,\,\,\,\,\,\,\,\,\,\,r(x_3^{(2)}|E)=(3,2,1,5,6),\,\,\,\,\,\,\,\,\,\,\,\,r(x_3^{(3)}|E)=(4,3,2,6,5),\,\,\,\,\,\,\,\,\,\,\,\,r(x_3^{(4)}|E)=(5,4,3,7,4)$\\
$r(x_4^{(1)}|E)=(1,2,1,3,6),\,\,\,\,\,\,\,\,\,\,\,\,r(x_4^{(2)}|E)=(2,3,2,4,5),\,\,\,\,\,\,\,\,\,\,\,\,r(x_4^{(3)}|E)=(3,4,3,5,4),\,\,\,\,\,\,\,\,\,\,\,\,r(x_4^{(4)}|E)=(4,5,4,6,3)$\\
\\
$r(x_5^{(1)}|E)=(1,2,3,1,4),\,\,\,\,\,\,\,\,\,\,\,\,r(x_5^{(2)}|E)=(2,3,4,2,3),\,\,\,\,\,\,\,\,\,\,\,\,r(x_5^{(3)}|E)=(3,4,5,3,2),\,\,\,\,\,\,\,\,\,\,\,\,r(x_5^{(4)}|E)=(4,5,6,4,1)$\\
$r(x_6^{(1)}|E)=(2,1,2,2,5),\,\,\,\,\,\,\,\,\,\,\,\,r(x_6^{(2)}|E)=(3,2,3,3,4),\,\,\,\,\,\,\,\,\,\,\,\,r(x_6^{(3)}|E)=(4,3,4,4,3),\,\,\,\,\,\,\,\,\,\,\,\,r(x_6^{(4)}|E)=(5,4,5,5,2)$\\
$r(x_7^{(1)}|E)=(3,2,1,3,6),\,\,\,\,\,\,\,\,\,\,\,\,r(x_7^{(2)}|E)=(4,3,2,4,5),\,\,\,\,\,\,\,\,\,\,\,\,r(x_7^{(3)}|E)=(5,4,3,5,4),\,\,\,\,\,\,\,\,\,\,\,\,r(x_7^{(4)}|E)=(6,5,4,6,3)$\\
$r(x_8^{(1)}|E)=(2,3,2,2,5),\,\,\,\,\,\,\,\,\,\,\,\,r(x_8^{(2)}|E)=(3,4,3,3,4),\,\,\,\,\,\,\,\,\,\,\,\,r(x_8^{(3)}|E)=(4,5,4,4,3),\,\,\,\,\,\,\,\,\,\,\,\,r(x_8^{(4)}|E)=(5,6,5,5,2)$\\
\\
$r(x_9^{(1)}|E)=(2,3,4,0,3),\,\,\,\,\,\,\,\,\,\,\,\,r(x_9^{(2)}|E)=(3,4,5,1,2),\,\,\,\,\,\,\,\,\,\,\,\,r(x_9^{(3)}|E)=(4,5,6,2,1),\,\,\,\,\,\,\,\,\,\,\,\,r(x_9^{(4)}|E)=(5,6,7,3,0)$\\
$r(x_{10}^{(1)}|E)=(3,2,3,1,4),\,\,\,\,\,\,\,\,\,\,\,\,r(x_{10}^{(2)}|E)=(4,3,4,2,3),\,\,\,\,\,\,\,\,\,\,\,\,r(x_{10}^{(3)}|E)=(5,4,5,3,2),\,\,\,\,\,\,\,\,\,\,\,\,r(x_{10}^{(4)}|E)=(6,5,6,4,1)$\\
$r(x_{11}^{(1)}|E)=(4,3,2,2,5),\,\,\,\,\,\,\,\,\,\,\,\,r(x_{11}^{(2)}|E)=(5,4,3,3,4),\,\,\,\,\,\,\,\,\,\,\,\,r(x_{11}^{(3)}|E)=(6,5,4,4,3),\,\,\,\,\,\,\,\,\,\,\,\,r(x_{11}^{(4)}|E)=(7,6,5,5,2)$\\
$r(x_{12}^{(1)}|E)=(3,4,3,1,4),\,\,\,\,\,\,\,\,\,\,\,\,r(x_{12}^{(2)}|E)=(4,5,4,2,3),\,\,\,\,\,\,\,\,\,\,\,\,r(x_{12}^{(3)}|E)=(5,6,5,3,2),\,\,\,\,\,\,\,\,\,\,\,\,r(x_{12}^{(4)}|E)=(6,7,6,4,1)$\\
\end{example}
\begin{theorem}\label{w.1}
If $n$ is an even or odd integer is greater than or equal to $3$, then  the minimum size of a strong resolving set  of vertices for the cartesian product $(C_n\Box P_k)\Box P_m$ is  $2n$.
\end{theorem}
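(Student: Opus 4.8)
The plan is to use the standard characterization of the strong metric dimension via vertex covers of the \emph{strong resolving graph} (Oellermann and Peters-Fransen~\cite{pap-oro-1}). Let $H=(C_n\Box P_k)\Box P_m$ and let $H_{SR}$ be the graph whose vertex set is the set of vertices of $H$ that are mutually maximally distant (Definition~\ref{b.1.1}) from some vertex, with an edge joining each mutually maximally distant pair; then $sdim(H)$ equals the minimum size of a vertex cover of $H_{SR}$. So everything reduces to identifying $H_{SR}$ and computing its vertex cover number.

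The first step is the product rule for maximally distant pairs. Write a vertex of $H$ as a triple $(a,b,c)$ with $a\in\mathbb{Z}_n$, $1\le b\le k$, $1\le c\le m$; then $d_H((a,b,c),(a',b',c'))=d_{C_n}(a,a')+d_{P_k}(b,b')+d_{P_m}(c,c')$, and every neighbour of $(a,b,c)$ alters exactly one coordinate by one step. Hence $(a,b,c)$ is maximally distant from $(a',b',c')$ in $H$ if and only if $a$ is maximally distant from $a'$ in $C_n$, $b$ from $b'$ in $P_k$, and $c$ from $c'$ in $P_m$, and therefore the two triples are mutually maximally distant in $H$ exactly when $a,a'$ are mutually maximally distant in $C_n$, $\{b,b'\}=\{1,k\}$, and $\{c,c'\}=\{1,m\}$ (in a path the only mutually maximally distant pair is the pair of endpoints, since an interior vertex always has a neighbour strictly farther from any prescribed vertex). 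Thus $V(H_{SR})=\{(a,b,c): b\in\{1,k\},\ c\in\{1,m\}\}$ has $4n$ elements and $H_{SR}$ is the tensor product $(C_n)_{SR}\times K_2\times K_2$ (adjacency meaning adjacency in each factor), where, as one checks directly (and consistently with Remarks~\ref{b.4} and~\ref{b.5}), $(C_n)_{SR}$ is the perfect matching $\frac n2 K_2$ if $n$ is even and the cycle $C_n$ if $n$ is odd.

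It then remains to compute the vertex cover number of $H_{SR}$. Since $K_2\times K_2\cong 2K_2$, when $n$ is even $H_{SR}\cong 2n\,K_2$, with minimum vertex cover $2n$; when $n$ is odd, $C_n\times K_2$ is a connected $2$-regular graph on $2n$ vertices, hence $C_{2n}$, and $C_{2n}\times K_2\cong 2\,C_{2n}$ (the tensor product of a connected bipartite graph with $K_2$ is two disjoint copies of it), with minimum vertex cover $2\cdot n=2n$. Either way $sdim(H)=2n$. As an explicit optimum I would take $W=V_1^{(1)}\cup V_1^{(m)}$, the ``$b=1$'' layers in the first and last $P_m$-copies, so $|W|=2n$: every edge of $H_{SR}$ joins a vertex with $b=1$ to one with $b=k$, so $W$ is a vertex cover of $H_{SR}$, hence a strong resolving set; and a matching of size $2n$ in $H_{SR}$ (a perfect matching when $n$ is even, a perfect matching in each of the two $2n$-cycles when $n$ is odd) gives $sdim(H)\ge 2n$ directly, since any strong resolving set must meet each mutually maximally distant pair — otherwise a suitable neighbour of an uncovered pair's vertex would contradict maximal distance.

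The main obstacle is making the identification of $H_{SR}$ completely rigorous: proving the product rule for maximally distant pairs with attention to the degenerate subcases (two coordinates coinciding), and verifying that no vertex with $b\notin\{1,k\}$ or $c\notin\{1,m\}$ can be mutually maximally distant from anything, which is where one uses that an interior path-coordinate always has a neighbour strictly farther from a given vertex. After that the tensor-product bookkeeping and the vertex-cover counts are routine, the only slightly delicate point being the parity split for the cycle factor — for odd $n$ the double cover of the odd cycle $(C_n)_{SR}=C_n$ is a single $2n$-cycle, not two $n$-cycles.
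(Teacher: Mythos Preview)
Your proof is correct, and it takes a genuinely different route from the paper's. The paper argues directly: for the lower bound it observes (somewhat informally) that vertices of $V_1^{(1)}$ are mutually maximally distant from vertices of $V_k^{(m)}$, and likewise $V_1^{(m)}$ with $V_k^{(1)}$, so any strong resolving set must pick up at least one vertex from each such pair, giving $\ge 2n$; for the upper bound it takes the same set $T=V_1^{(1)}\cup V_1^{(m)}$ you propose and verifies \emph{by explicit case analysis} (Cases~1.1--2.3) that for every pair $x_e^{(i)},x_d^{(j)}$ one of them lies on a shortest path to a suitable vertex of $T$. You instead invoke the Oellermann--Peters-Fransen identity $sdim(H)=\tau(H_{SR})$, establish the factorisation $H_{SR}\cong (C_n)_{SR}\times K_2\times K_2$ via the coordinate-wise criterion for mutual maximal distance in a Cartesian product, and then read off $\tau(H_{SR})=2n$ from the tensor-product structure (a $2n\,K_2$ when $n$ is even, a $2\,C_{2n}$ when $n$ is odd).

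What each buys: the paper's argument is self-contained and avoids the strong-resolving-graph machinery, at the cost of a multi-case verification whose completeness one must check by hand. Your argument is shorter and more structural; it makes the lower bound fully rigorous via an explicit matching of size $2n$ in $H_{SR}$, and the method immediately generalises to arbitrary Cartesian products once the factors' strong resolving graphs are known (this is in the spirit of Rodr\'{i}guez-Vel\'{a}zquez et~al.~\cite{pap-ja-1}). The one place to be careful, which you flag yourself, is the degenerate subcase where a path-coordinate coincides in the two vertices: there an endpoint is \emph{not} maximally distant from itself, so such pairs are correctly excluded from $H_{SR}$, confirming $V(H_{SR})$ sits inside the four corner layers $b\in\{1,k\}$, $c\in\{1,m\}$.
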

\begin{proof}
 Suppose that $(C_n\Box P_k)\Box P_m$ is a graph with vertex set $\{x_1^{(1)},  ..., x_{nk}^{(1)}\}\cup \{x_1^{(2)},  ..., x_{nk}^{(2)}\}\cup ... \cup\{x_1^{(m)},  ..., x_{nk}^{(m)}\}$ so that for $1\leq t \leq nk$, the vertex $x_t^{(r)}$ is adjacent to $x_t^{(r+1)}$ in $(C_n\Box P_k)\Box P_m$, for $1\leq r\leq m-1$.  We know that each  vertex of  the layer  $V_1^{(1)}$  is maximally distant from a vertex of  the layer $V_k^{(m)}$ and
each  vertex of  the layer  $V_k^{(m)}$  is maximally distant from a vertex of  the layer $V_1^{(1)}$. In particular, each  vertex of  the layer  $V_1^{(m)}$  is maximally distant from a vertex of  the layer $V_k^{(1)}$ and
each  vertex of  the layer  $V_k^{(1)}$  is maximally distant from a vertex of  the layer $V_1^{(m)}$, and hence the minimum size of a strong resolving set of vertices for the cartesian product $(C_n\Box P_k)\Box P_m$ is equal or greater than $2n$, because it is well known that for
every pair of mutually maximally distant vertices $u$ and $v$ of a connected graph $G$ and for every strong metric
basis $S$ of $G$, it follows that $u\in S$ or $v\in S$.
Suppose the set $\{x_1^{(1)},  ...,  x_n^{(1)}\}$ is an arranged subset of vertices in  the layer $V_1^{(1)}$ of the cartesian product $(C_n\Box P_k)\Box P_m$ and suppose that the set $\{x_1^{(m)},  ...,  x_n^{(m)}\}$ is an arranged subset of vertices in  the layer $V_1^{(m)}$ of the cartesian product $(C_n\Box P_k)\Box P_m$. Now, let $T=\{x_1^{(1)},  ...,  x_n^{(1)}\}\cup \{x_1^{(m)},  ...,  x_n^{(m)}\}$ be an arranged subset of vertices   of the cartesian product $(C_n\Box P_k)\Box P_m$. In the following cases we show that the arranged set $T$, defined already, is one of the  minimum strong resolving sets  of vertices for the cartesian product $(C_n\Box P_k)\Box P_m$. For this purpose let $x_e^{(i)}$ and $x_d^{(j)}$ be two various vertices of $(C_n\Box P_k)\Box P_m$, $1\leq i,j\leq m$, $1\leq e,d\leq nk$ and $1\leq r\leq n$.\\

Case 1. If $i=j$ then $x_e^{(i)}$ and $x_d^{(i)}$ lie in $i^{th}$ copy of $(C_n\Box P_k)$ with vertex set $\{x_1^{(i)},  ..., x_{nk}^{(i)}\}$ so that $i^{th}$ copy of $(C_n\Box P_k)$ is a subgraph of $(C_n\Box P_k)\Box P_m$. Since $i=j$ then we can assume that $e<d$, because $x_e^{(i)}$ and $x_d^{(i)}$ are various vertices.\\

Case 1.1. If both vertices $x_e^{(i)}$ and $x_d^{(i)}$ are  compatible in $i^{th}$ copy of $(C_n\Box P_k)$ relative to $x_r^{(i)}\in V_1^{(i)}$, then there is the vertex $x_r^{(1)}\in V_1^{(1)}\subset T$ so that $x_e^{(i)}$ belongs to shortest path  $x_r^{(1)}-x_d^{(i)}$, say as $x_r^{(1)},..., x_r^{(i)},..., x_e^{(i)}, ..., x_d^{(i)}$.\\

Case 1.2. Suppose both vertices $x_e^{(i)}$ and $x_d^{(i)}$ are not  compatible in $i^{th}$ copy of $(C_n\Box P_k)$, and lie
in various layers  or lie in the same layer in $i^{th}$ copy of $(C_n\Box P_k)$, also let
$x_r^{(i)}\in V_1^{(i)}$, be a compatible vertex relative to $x_e^{(i)}$. Hence there is
 the vertex $x_r^{(m)}\in V_1^{(m)}\subset T$ so that
  $x_e^{(i)}$ belongs to shortest path  $x_r^{(m)}-x_d^{(i)}$,
 say as $x_r^{(m)},..., x_r^{(i)},..., x_e^{(i)}, ..., x_d^{(i)}$.\\

Case 2. If $i\neq j$ then $x_e^{(i)}$ lie in $i^{th}$ copy of $(C_n\Box P_k)$ with vertex set $\{x_1^{(i)},  ..., x_{nk}^{(i)}\}$, and $x_d^{(j)}$ lie in $j^{th}$ copy of $(C_n\Box P_k)$ with vertex set $\{x_1^{(j)},  ..., x_{nk}^{(j)}\}$. In this case we can assume that $i<j$.\\

Case 2.1. If $e=d$ and $x_r^{(j)}\in V_1^{(j)}$  is a compatible vertex relative to  $x_d^{(j)}$, then there is the vertex $x_r^{(m)}\in V_1^{(m)}\subset T$ so that $x_d^{(j)}$ belongs to shortest path  $x_r^{(m)}-x_e^{(i)}$, say as $x_r^{(m)},..., x_r^{(j)},..., x_d^{(j)}, ..., x_e^{(i)}$.\\

Case 2.2. If $e< d$, also $x_e^{(i)}$ and $x_d^{(j)}$ lie in various layers of $(C_n\Box P_k)\Box P_m$ or $x_e^{(i)}$ and $x_d^{(j)}$ lie in the same layer of $(C_n\Box P_k)\Box P_m$ and $x_r^{(i)}\in V_1^{(i)}$ is a compatible vertex relative to  $x_e^{(i)}$, then there
is the vertex $x_r^{(1)}\in V_1^{(1)}\subset T$ so that  $x_e^{(i)}$ belongs to shortest path  $x_r^{(1)}-x_d^{(j)}$, say as $x_r^{(1)},..., x_r^{(i)},..., x_e^{(i)}, ..., x_d^{(j)}$.\\

Case 2.3. If $e> d$, also $x_e^{(i)}$ and $x_d^{(j)}$ lie in various layers of $(C_n\Box P_k)\Box P_m$ or $x_e^{(i)}$ and $x_d^{(j)}$ lie in the same layer of $(C_n\Box P_k)\Box P_m$ and $x_r^{(j)}\in V_1^{(j)}$ is a compatible vertex relative to  $x_d^{(j)}$, then there
is the vertex $x_r^{(m)}\in V_1^{(m)}\subset T$ so that  $x_d^{(j)}$ belongs to shortest path  $x_r^{(m)}-x_e^{(i)}$, say as
 $x_r^{(m)},..., x_r^{(j)},..., x_d^{(j)}, ..., x_e^{(i)}$.

\end{proof}
\bigskip
{\footnotesize

\noindent \textbf{Data Availability}\\
No data were used to support this study.\\[2mm]
\noindent \textbf{Conflicts of Interest}\\
The authors declare that there are no conflicts of interest
regarding the publication of this paper.\\[2mm]
\noindent \textbf{Acknowledgements}\\
This work was supported in part by Anhui Provincial Natural Science Foundation under Grant 2008085J01 and Natural Science Fund of Education Department of Anhui Province under Grant KJ2020A0478. \\[2mm]
\noindent \textbf{Authors' informations}\\
\noindent Jia-Bao Liu${}^a$
(\url{liujiabaoad@163.com;liujiabao@ahjzu.edu.cn})\\
Ali Zafari${}^{a}$(\textsc{Corresponding Author})
(\url{zafari.math.pu@gmail.com}; \url{zafari.math@pnu.ac.ir})\\

\noindent ${}^{a}$ School of Mathematics and Physics, Anhui Jianzhu University, Hefei 230601, P.R. China.\\
${}^{a}$ Department of Mathematics, Faculty of Science,
Payame Noor University, P.O. Box 19395-4697, Tehran, Iran.

{\footnotesize


\bigskip
\end{document}